\newtheorem{theorem}{Theorem}[section]
\newtheorem{lemma}[theorem]{Lemma}
\newtheorem{proposition}[theorem]{Proposition}
\newtheorem{corollary}[theorem]{Corollary}
\newtheorem{definition}[theorem]{Definition}
\theoremstyle{remark}
\newtheorem{remark}[theorem]{Remark}
\numberwithin{equation}{section}
\newcommand{\Ric}{\operatorname{Ric}}
\newcommand{\II}{\mathrm I\!\mathrm I}
\newcommand{\dist}{\operatorname{dist}}
\begin{document}

\title
[On the Dirichlet boundary value problem]
{On the Dirichlet boundary value problem on Cartan-Hadamard manifolds}

 
 \author[Cavalcante]{Marcos P. Cavalcante}
 \author[Espinar]{José  M. Espinar}
 \author[Mar\'in]{Diego A.  Mar\'in}
 \address{Institute of Mathematics, Federal University of Alagoas, Macei\'o - Brazil}
\email{marcos@pos.mat.ufal.br}
 \address{Department of Geometry and Topology and Institute of Mathematics (IMAG), University of Granada, 18071, Granada, Spain}
 \email{jespinar@ugr.es, damarin@ugr.es}

\subjclass[2020]{Primary 58J32, 35J61, 53C21; Secondary 31C12, 35D40,  35Pxx.}

\date{\today}

\keywords{Cartan--Hadamard manifolds, ideal boundary, visual metric, Hausdorff dimension, semilinear elliptic equations, convex barriers, viscosity solutions}

\begin{abstract}
In this paper, we investigate the Dirichlet boundary value problem on Cartan-Hadamard manifolds, focusing on the non-existence of bounded (viscosity) solutions to semi-linear elliptic equations of the form \(\Delta u + f(u) = 0\) in domains with prescribed asymptotic boundary, extending previous results by Bonorino and Klaser originally established for hyperbolic spaces. 

Using a novel comparison technique based on convex hypersurfaces inspired by Choi, Gálvez, and Lozano, we overcome the absence of totally geodesic foliations, which are instrumental in the hyperbolic space. Our results highlight the interplay between curvature, the spectrum of the Laplacian, and the geometry of the asymptotic boundary.
\end{abstract}

\maketitle
\mbox{}
\tableofcontents

\section{Introduction}\label{intro}

The study of bounded solutions to semilinear elliptic equations on Riemannian manifolds is a central topic in geometric analysis, lying at the interface of differential geometry, potential theory, and partial differential equations.

In Euclidean space, Liouville's theorem asserts that every bounded harmonic function is constant. This rigidity phenomenon extends to complete Riemannian manifolds with nonnegative Ricci curvature, as shown by Yau~\cite{Yau75}, who proved that such manifolds admit no nonconstant bounded harmonic functions.

In contrast, negatively curved spaces such as the hyperbolic space admit a rich family of bounded harmonic functions, many of which reflect the geometry at infinity. A natural framework for analyzing this phenomenon is provided by Cartan--Hadamard manifolds, which are simply connected and have nonpositive sectional curvature. In this setting, the manifold admits a compactification 
$\bar{M}^n = M^n \cup \partial_\infty M^n$ where $ \partial_\infty M^n$ is the ideal boundary.

When the sectional curvature is pinched between two negative constants, Anderson~\cite{Anderson83} and Sullivan~\cite{Sullivan83} proved that every continuous function on  $ \partial_\infty M^n$ admits a continuous harmonic extension to 
$\bar{M}^n$. This result was refined by Anderson and Schoen~\cite{AndersonSchoen85}, who provided explicit representation formulas for such harmonic functions (see also~\cite[Chapter~2]{SY94}). Ancona~\cite{Ancona87} further extended these results to a broader class of elliptic operators using Martin boundary theory. These foundational works show that Cartan--Hadamard manifolds with pinched negative curvature support a large class of bounded harmonic functions.

These developments naturally lead to the study of Schr\"odinger-type equations of the form
\[
\Delta u + V u = 0, \quad V \in C^\infty(M^n),
\]
where $\Delta$ is the Laplace--Beltrami operator. It is well known that the existence of a positive solution to this equation on a complete manifold is equivalent to the nonnegativity of the bottom of the spectrum of the associated Schr\"odinger operator $L=\Delta+V$ (see~\cite[Theorem~1]{FCS80}). In particular, if $\lambda_0(M^n)>0$, where $\lambda_0(M^n)$ denotes the bottom of the spectrum of $\Delta$ on $M^n$, then (see~\cite{Ancona87}, p.~496) for every $0<\lambda<\lambda_0(M^n)$ the manifold admits a large class of exponentially decaying positive solutions to
\[
\Delta u + \lambda u = 0.
\]

A remarkable connection between the spectral theory of negatively curved manifolds and the geometry at infinity was established by Sullivan~\cite[Theorem~2.17]{Sullivan87}. Let \( M^n = \mathbb{H}^n / \Gamma \), where \( \Gamma \) is a Kleinian group acting on hyperbolic space. 
Sullivan proved that if \( \Gamma \) is geometrically finite and the Hausdorff dimension of its limit set $\Gamma$ satisfies $\dim_H(\Gamma) \le \frac{n-1}{2}$ (see also \cite{LiWang2005}),
then
\[
\lambda_0(M^n) = \lambda_0(\mathbb{H}^n) = \frac{(n-1)^2}{4}.
\]
This result highlights a deep relationship between spectral data and the size of the asymptotic boundary, measured via Hausdorff dimension.

Exploring a related question, Bonorino and Klaser~\cite{BonorinoKlaser} investigated the existence of bounded \( \lambda \)-harmonic functions in unbounded domains \( \Omega \subset \mathbb{H}^n \). More precisely, they proved that if the \( (n-1)/2 \)-dimensional Hausdorff measure of \( \partial_\infty \Omega \) is zero, then there are no nontrivial bounded solutions to
\[
\Delta u + \lambda u = 0 \quad \text{in } \Omega, \qquad u = 0 \quad \text{on } \partial \Omega,
\]
for any \( \lambda \in [0, \frac{(n-1)^2}{4}] \). They also showed that the result is sharp in terms of Hausdorff dimension.

On the other hand, the second author and Mao \cite{EspinarMao} established that if \( M^n \) is a Cartan–Hadamard manifold with sectional curvature satisfying
\[
-\kappa_1 \leq K \leq -\kappa_2 < 0,
\]
and \( \Omega \subset M^n \) is a connected domain admitting a positive solution to
\[
\Delta u + f(u) = 0 \quad \text{in } \Omega,
\]
where \( f(u) \geq \lambda u \) for some constant \( \lambda > \frac{(n-1)^2 \kappa_1}{4} \), then \( \partial_\infty \Omega \) cannot contain a conical point of radius \( r > c_1(n, \kappa_1)\sqrt{\lambda} \). In particular,
\[
\dim_H (\partial_\infty \Omega) < n - 1.
\]

In this paper, we generalize the result of Bonorino and Klaser in two directions:
to the broader setting of Cartan--Hadamard manifolds with pinched negative sectional curvature
and to a more general class of semilinear elliptic equations.
A key difficulty is the lack of totally geodesic foliations in this more general context.
To overcome this, we develop a comparison argument inspired by Choi~\cite{Choi}
and G\'alvez--Lozano~\cite{GalvezLozano}, based on the construction of suitable convex barriers.

More precisely, our barriers are obtained through a ``scooping'' procedure that produces an
unbounded convex set $\mathcal C\subset M^n$ with controlled geometry at infinity.
This construction goes back to Anderson's work on the Dirichlet problem at infinity
in pinched negatively curved manifolds~\cite{Anderson83} and was later refined in different
directions, notably by Borb\'ely \cite{Borbely} and, under weaker
curvature assumptions, by Ripoll--Telichevesky~\cite{RipollTelichevesky}.
In order to work with smooth hypersurfaces, we also use the smoothing/approximation of convex
functions due to Greene--Wu~\cite{GW} to replace $\partial\mathcal C$ by a nearby smooth strictly
convex hypersurface without affecting the key barrier properties.

Our setting covers a wide class of manifolds, including all rank-one symmetric spaces of noncompact type: the real hyperbolic space $\mathbb{H}^n$, which corresponds to the case considered by Bonorino and Klaser, as well as the complex hyperbolic space $ \mathbb{CH}^n$, the quaternionic hyperbolic space $\mathbb{HH}^n$, and the Cayley hyperbolic plane $\mathbb{OH}^2 $. These are all Cartan--Hadamard manifolds with pinched negative sectional curvature, and thus fall within the scope of our analysis.

Moreover, we can also cover weak solutions of $\Delta u + f(u)=0$ in the \emph{viscosity sense}. Viscosity solutions come with a robust comparison machinery on Riemannian manifolds that allows us to compare $C^2$ subsolutions, the barriers we will construct, with merely continuous supersolutions, the given data. Concretely, we rely on a standard comparison principle on manifolds (see \cite{AzagraFerreraSanz2008} for viscosity theory on Riemannian manifolds and \cite{PengZhou2008} for maximum principles in the manifold/viscosity setting) and the eigenvalue characterization of \cite{BNV1994,QuaasSirakov2006,QuaasSirakov2008}.

\subsection{Outline of the paper and statement of the main results.}

Section~\ref{Preliminaries} reviews the geometric and analytic preliminaries needed throughout the paper. In particular, we recall the notion of a visual metric on the ideal boundary $\partial_\infty M^n$ of a manifold with sectional curvature bounded above by a negative constant. Given any set $A\subset \partial_\infty M^n$, we write $\dim_H(A)$ for its Hausdorff dimension with respect to a fixed visual metric (see Subsection~\ref{subsect:HausdorffDimension}).

In Section~\ref{Distance}, we construct distance-based barriers $v=h\circ d_\Sigma$ from properly embedded convex hypersurfaces $\Sigma$ and from an ODE for $h$ dictated by the curvature upper bound. Along minimizing geodesics orthogonal to $\Sigma$, we obtain the differential inequality
\[
\Delta v+\lambda\,v\le 0
\]
at regular points (i.e.\ outside the cut locus), with $\lambda$ explicitly linked to the curvature bound and to the geometry of $\Sigma$ (see Lemma~\ref{EDO-v}). This replaces the use of totally geodesic foliations by a flexible convex-hypersurface framework adapted to Cartan--Hadamard manifolds.

Our main theorem, stated below for pinched Cartan--Hadamard manifolds, follows as a direct consequence of a slightly more general result proved in Section~\ref{Main}. 
In that section, we prove a nonexistence theorem assuming only an upper sectional curvature bound $K\le -\kappa<0$ together with the existence of a family of smooth, closed convex barriers that separate a fixed base point from prescribed cones at infinity and satisfy a uniform lower bound on their distance to the base point. In the pinched setting, these barriers are produced in Section~\ref{Barriers} via Anderson's convex sets, and this yields Theorem~\ref{thm:main}.
\begin{theorem}\label{thm:main}
Let $(M^n,g)$ be a Cartan--Hadamard manifold with pinched sectional curvatures
$-\kappa_1 \le K \le -\kappa_2$, for some $\kappa_1\ge \kappa_2>0$. If $\Omega\subset M^n$ is a domain such that
\[
\dim_H(\partial_\infty \Omega) < \frac{n-1}{2}\cdot \frac{\sqrt{\kappa_2}}{\sqrt{\kappa_1}},
\]
then there exists no nontrivial bounded solution $u\in C^2(\Omega)\cap C^0(\overline{\Omega})$ to the Dirichlet problem
\begin{equation}
\begin{cases}
\Delta u + f(u) = 0 & \text{in } \Omega,\\
u = 0 & \text{on } \partial \Omega,
\end{cases}
\end{equation}
where $|f(u)|\le \lambda |u|$, $f(-u)=-f(u)$, and $\lambda \le \frac{(n-1)^2\kappa_2}{4}$.
Moreover, if $f$ is non-decreasing, the same holds for viscosity solutions.
\end{theorem}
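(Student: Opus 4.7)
The plan is a contradiction argument replacing the totally-geodesic foliations of \cite{BonorinoKlaser} with the convex barriers of Section~\ref{Barriers} and the distance-based ODE ansatz of Section~\ref{Distance}. Suppose a nontrivial bounded solution $u$ exists; since $f(-u)=-f(u)$, $-u$ solves the same Dirichlet problem, so I may assume $\sup_\Omega u>0$. Fix $x_0\in\Omega$ and set $C_0:=\sup_{\overline\Omega}|u|$; the goal is to prove $u(x_0)\le 0$, which combined with the analogous bound for $-u$ and the arbitrariness of $x_0$ forces $u\equiv 0$.

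Fix a visual metric on $\partial_\infty M^n$ with parameter $\sqrt{\kappa_2}$ and set $\sigma:=\tfrac{(n-1)\sqrt{\kappa_2}}{2\sqrt{\kappa_1}}$. Since $\dim_H(\partial_\infty\Omega)<\sigma$, the $\sigma$-dimensional Hausdorff measure of $\partial_\infty\Omega$ vanishes, and by compactness we pick, for every $\varepsilon>0$, a finite cover of $\partial_\infty\Omega$ by visual balls $\{B_j\}_{j=1}^N$ of radii $r_j<\varepsilon$ with $\sum_j r_j^{\,\sigma}<\varepsilon$. For each $B_j$ the scooping construction of Section~\ref{Barriers} furnishes a smooth closed strictly convex hypersurface $\Sigma_j$ separating $x_0$ from the cone over $B_j$ and obeying
\[
\dist(x_0,\Sigma_j)\ge -\tfrac{1}{\sqrt{\kappa_1}}\log r_j - C_1,
\]
where the $\sqrt{\kappa_1}$ in the denominator reflects the use of the lower sectional curvature bound in the Anderson/Borb\'ely convex-hull argument.

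Let $\alpha_0:=\tfrac{n-1}{2}\sqrt{\kappa_2}$, the positive root of $t^2-(n-1)\sqrt{\kappa_2}\,t+\tfrac{(n-1)^2\kappa_2}{4}=0$, and $h(t):=C_0\,e^{-\alpha_0 t}$. By Lemma~\ref{EDO-v}, the functions $v_j(x):=h(\dist(x,\Sigma_j))$ satisfy $\Delta v_j+\lambda v_j\le 0$ on the $x_0$-side of $\Sigma_j$, classically outside the cut locus of $\Sigma_j$ and, since $v_j$ is a convex decreasing function of the distance to a convex set, in the viscosity sense across it. Moreover $v_j=C_0\ge|u|$ on $\Sigma_j$ and $v_j\ge 0=u$ on $\partial\Omega$. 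Let $V:=\sum_j v_j$ and $\Omega_\varepsilon$ denote the connected component of $x_0$ in $\Omega\setminus\bigcup_j\Sigma_j$; because the cones over the $B_j$ cover $\partial_\infty\Omega$, $\Omega_\varepsilon$ is relatively compact in $\overline\Omega$ and $V\ge u$ on $\partial\Omega_\varepsilon$. The comparison principle for $\Delta+\lambda$ (classical for $C^2$ solutions, and in the viscosity form of \cite{AzagraFerreraSanz2008,PengZhou2008,QuaasSirakov2006,QuaasSirakov2008} when $f$ is merely nondecreasing) gives $u\le V$ on $\Omega_\varepsilon$; evaluating at $x_0$,
\[
u(x_0)\ \le\ V(x_0)\ \le\ C_0\,e^{\alpha_0 C_1}\sum_{j=1}^N r_j^{\,\sigma}\ <\ C_0\,e^{\alpha_0 C_1}\,\varepsilon,
\]
and letting $\varepsilon\to 0$ closes the contradiction.

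The main obstacle is the coordinated use of the two curvature bounds: the scooping distance $\dist(x_0,\Sigma_j)$ degrades with the lower bound $-\kappa_1$, while Lemma~\ref{EDO-v} extracts the barrier decay rate $\alpha_0=\tfrac{n-1}{2}\sqrt{\kappa_2}$ from the upper bound $-\kappa_2$. The pinching quotient $\sqrt{\kappa_2}/\sqrt{\kappa_1}$ in the dimension threshold is the arithmetic outcome of combining these two estimates, and collapses to the classical Bonorino--Klaser exponent $(n-1)/2$ when $\kappa_1=\kappa_2$. In the viscosity framework one additionally needs the $\Sigma_j$ to be $C^2$ and strictly convex on a neighborhood (hence the Greene--Wu smoothing flagged in the introduction) and $f$ to be nondecreasing, so that $u-V$ obeys a viscosity inequality for a proper operator to which the comparison principle applies.
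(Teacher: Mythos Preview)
Your strategy is exactly the paper's: cover $\partial_\infty\Omega$ by small visual balls, attach a convex barrier $\Sigma_j$ to each, build radial supersolutions $v_j=h\circ d_{\Sigma_j}$, sum them, and compare with $u$. There is, however, a genuine gap in your choice of profile. Lemma~\ref{EDO-v} applies only when $h$ is an actual solution of the ODE~\eqref{EDO_Hn}; the pure exponential $h(t)=C_0e^{-\alpha_0 t}$ is not. If you run the computation in the proof of Lemma~\ref{EDO-v} with your $h$, you get
\[
\Delta v_j+\lambda v_j
\ \le\
\Bigl(\alpha_0^2-(n-1)\sqrt{\kappa_2}\tanh(\sqrt{\kappa_2}\,d_{\Sigma_j})\,\alpha_0+\lambda\Bigr)\,h(d_{\Sigma_j}),
\]
and with $\alpha_0=\tfrac{(n-1)\sqrt{\kappa_2}}{2}$ the bracket equals
$\tfrac{(n-1)^2\kappa_2}{2}\bigl(1-\tanh(\sqrt{\kappa_2}\,d_{\Sigma_j})\bigr)+\bigl(\lambda-\tfrac{(n-1)^2\kappa_2}{4}\bigr)$.
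This is strictly positive for small $d_{\Sigma_j}$, and in the critical case $\lambda=\tfrac{(n-1)^2\kappa_2}{4}$ it is strictly positive for \emph{every} finite distance. Hence $v_j$ is not a supersolution on the region you use (the full $x_0$-side of $\Sigma_j$), and your boundary condition ``$v_j=C_0$ on $\Sigma_j$'' cannot be paired with the differential inequality. The paper avoids this by taking $h$ to be the genuine ODE solution from \cite[Lemma~3.1]{BonorinoKlaser}, which is positive and decreasing only on $[t_0,\infty)$, and accordingly restricting the comparison to $\Omega_0=\bigcap_j\{d_{\Sigma_j}\ge t_0\}$ with the barrier normalized so that $v_j=\Lambda\ge 2\sup|u|$ on $\{d_{\Sigma_j}=t_0\}$.

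A second point that deserves justification: you invoke ``the comparison principle for $\Delta+\lambda$'', but for $\lambda>0$ this is not automatic---it holds on a bounded domain $D$ precisely when $\lambda<\lambda_1(D)$. The paper does not assume this; instead it sets $w=u-V$, takes $D$ to be a component of $\{w>0\}$, observes that $\Delta w+\lambda w\ge 0$ there with $w>0$ and $w=0$ on $\partial D$, and then uses Proposition~\ref{prop:Lambda} to conclude $\lambda_1(D)\le\lambda$, which contradicts McKean's bound $\lambda_1(D)>\lambda_0(M)\ge\tfrac{(n-1)^2\kappa_2}{4}$. Your direct comparison can be made rigorous along the same lines, but as written it skips the step that explains why the operator is ``proper'' in the relevant sense. (Your remark that there may be a cut locus of $\Sigma_j$ on the $x_0$-side is unnecessary: since $\Sigma_j$ bounds a convex set and $M$ is Cartan--Hadamard, the nearest-point projection is unique there and $d_{\Sigma_j}$ is smooth; cf.\ the last sentence of Lemma~\ref{EDO-v}.)
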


The bound on the Hausdorff dimension in Theorem~\ref{thm:main} is sharp in the case of real hyperbolic space. Indeed, Bonorino and Klaser~\cite{BonorinoKlaser} constructed explicit examples of domains in $\mathbb{H}^n$ with asymptotic boundary of dimension $s>\frac{n-1}{2}$ that support bounded solutions decaying exponentially at infinity. Whether the analogous threshold $\frac{n-1}{2}\cdot \frac{\sqrt{\kappa_2}}{\sqrt{\kappa_1}}$ is optimal for general Cartan--Hadamard manifolds remains an open question.

Several important nonlinearities in mathematical physics satisfy the hypotheses of Theorem~\ref{thm:main}, including the Allen--Cahn nonlinearity $f(u)=u-u^3$, as well as $f(u)=\tanh u$, $f(u)=\arctan u$, and $f(u)=\frac{u}{1+u^2}$. As a consequence, we obtain the following corollary. Note that any bounded solutions of the Allen--Cahn equation in Corollary~\ref{cor:AC} satisfy $-1\le u\le 1$ (see Appendix~\ref{appendixB}), and thus satisfy the hypotheses of Theorem~\ref{thm:main}.

\begin{corollary}\label{cor:AC}
Let $n\ge 3$ and let $M^n$ be a Cartan--Hadamard manifold with sectional curvature satisfying $-\kappa \le K \le -1$, and let $\Omega\subset M^n$ be a domain with smooth boundary. Suppose that
\[
\dim_H(\partial_\infty \Omega) < \frac{n-1}{2\sqrt{\kappa}}.
\]
Then there exists no nontrivial bounded solution $u\in C^2(\Omega)\cap C^0(\overline{\Omega})$ to the Allen--Cahn equation
\begin{equation}\label{eq:AllenCahn}
\begin{cases}
\Delta u + u - u^3 = 0 & \text{in } \Omega,\\
u = 0 & \text{on } \partial \Omega.
\end{cases}
\end{equation}
\end{corollary}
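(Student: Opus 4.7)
The plan is to obtain Corollary~\ref{cor:AC} as a direct specialization of Theorem~\ref{thm:main} to the Allen--Cahn nonlinearity $f(u)=u-u^3$, with the curvature parameters $\kappa_1=\kappa$ and $\kappa_2=1$, and the spectral parameter $\lambda=1$. The verification reduces to checking the three structural hypotheses on $f$ and the admissibility of $\lambda$.

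First I would record the symmetry: $f(-u)=(-u)-(-u)^3=-(u-u^3)=-f(u)$, so $f$ is odd. Next, to obtain the bound $|f(u)|\le \lambda|u|$ with $\lambda=1$, I need the a priori estimate $|u|\le 1$ for any bounded solution of the Dirichlet problem \eqref{eq:AllenCahn}. This is precisely the statement invoked from Appendix~\ref{appendixB}; I would cite it and remark that it follows from a standard maximum principle argument applied to $(u-1)_+$ and $(u+1)_-$, using that $f(u)<0$ for $u>1$ and $f(u)>0$ for $u<-1$, together with the boundary condition $u=0$ on $\partial\Omega$ and boundedness of $u$. With $|u|\le 1$ in hand, I factor $|f(u)|=|u||1-u^2|\le|u|$, since $0\le 1-u^2\le 1$ on $[-1,1]$. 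Hence $\lambda=1$ works.

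It remains to check that $\lambda=1$ falls within the admissible range $\lambda\le \frac{(n-1)^2\kappa_2}{4}=\frac{(n-1)^2}{4}$ required by Theorem~\ref{thm:main}. This is where the hypothesis $n\ge 3$ is used: for $n\ge 3$ we have $(n-1)^2/4\ge 1$, so the condition is satisfied. Plugging $\kappa_1=\kappa$, $\kappa_2=1$ into the Hausdorff dimension threshold of Theorem~\ref{thm:main} gives
\[
\frac{n-1}{2}\cdot\frac{\sqrt{\kappa_2}}{\sqrt{\kappa_1}}=\frac{n-1}{2\sqrt{\kappa}},
\]
which is exactly the bound assumed in the corollary. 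Thus every hypothesis of Theorem~\ref{thm:main} is met, and the corollary follows.

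There is no real obstacle here; the only non-trivial point is the a priori bound $|u|\le 1$, which is the content of Appendix~\ref{appendixB}. Everything else is an algebraic verification: the oddness of $f$, the pointwise estimate $|f(u)|\le |u|$ on $[-1,1]$, and the inequality $1\le(n-1)^2/4$ for $n\ge 3$. No further geometric or analytic input beyond Theorem~\ref{thm:main} is needed.
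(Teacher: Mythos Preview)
Your proposal is correct and matches the paper's own argument: the corollary is obtained as a direct application of Theorem~\ref{thm:main} with $\kappa_1=\kappa$, $\kappa_2=1$, $f(u)=u-u^3$, and $\lambda=1$, using the a~priori bound $|u|\le 1$ from Appendix~\ref{appendixB} to verify $|f(u)|\le|u|$ and the hypothesis $n\ge 3$ to ensure $1\le (n-1)^2/4$.
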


\section{Preliminary Results}\label{Preliminaries}

In this section, we present some preliminary results concerning Cartan--Hadamard manifolds and level sets of the distance function from a given hypersurface. Our main references are \cite{BridsonHaefliger}, \cite{Choi}, \cite{EO}, \cite{GalvezLozano}, \cite{Smith}, and \cite{SY94}. We also gather the viscosity definitions and the comparison principle. Our basic references are \cite{AzagraFerreraSanz2008} for viscosity theory on manifolds and \cite{PengZhou2008} for maximum principle in the manifold/viscosity setting. For the linear case $f(u)=\lambda u$ we also use the eigenvalue/maximum principle characterization \cite{BNV1994} and the viscosity eigenvalue theory for fully nonlinear operators \cite{QuaasSirakov2006,QuaasSirakov2008}.

\subsection{Asymptotic geometry of domains in Cartan--Hadamard manifolds}
\label{Asymptotic}

A Cartan--Hadamard manifold is a complete, simply connected Riemannian manifold
with nonpositive sectional curvature, i.e., $K \le 0$. Such manifolds are
diffeomorphic to $\mathbb R^n$ since, for any fixed base point $o\in M^n$, the
exponential map $\exp_o:T_oM^n\to M^n$ is a global diffeomorphism.

One of the main features of Cartan--Hadamard manifolds is the existence of the
\emph{ideal boundary} (or \emph{boundary at infinity}), denoted by
$\partial_\infty M^n$. This boundary consists of equivalence classes of
unit-speed geodesic rays, where two rays $\gamma_1,\gamma_2:[0,\infty)\to M^n$
are said to be equivalent if the distance between them remains bounded as
$t\to\infty$, namely if
\[
\limsup_{t\to\infty} \dist\big(\gamma_1(t),\gamma_2(t)\big)<\infty.
\]
It is shown in \cite{EO} that $\bar M^n:=M^n \cup \partial_\infty M^n$ can be endowed
with a natural topology, the \emph{cone topology}, which makes $\bar M^n$
homeomorphic to a closed topological ball.

Fix from now on a base point $o\in M^n$. For any $x,y\in \bar M^n$, let $\gamma_{x,y}$
denote the unique geodesic segment joining $x$ to $y$, parametrized by arc
length (see, for instance, \cite[Theorem~3.9.5]{Klingenberg}). We also use the
following notation for geodesic rays: given $z\in T_oM^n$, let
$\gamma_{o,z}:[0,\infty)\to M^n$ be the geodesic ray with initial conditions
$\gamma_{o,z}(0)=o$ and $\gamma_{o,z}'(0)=z$. Given $z\in T_oM^n$ and an angle
$\theta\in(0,\pi)$, we define the geodesic cone with vertex $o$, direction $z$,
and aperture $2\theta$ by
\[
C_o(z,\theta)
=\{x\in M^n\setminus\{o\}:\measuredangle_o(\gamma_{o,x}'(0),z)<\theta\},
\]
where $\measuredangle_o(\gamma_{o,x}'(0),z)$ denotes the angle at $o$ between
$\gamma_{o,x}'(0)$ and $z$. Geodesic balls $B_o(r)$, $r>0$, together with
truncated cones
\[
\mathcal T_o(z,\theta,r):=C_o(z,\theta)\setminus \overline{B}_o(r),
\]
form a basis for the cone topology.

If $\gamma:[0,\infty)\to M^n$ is a geodesic ray and $[\gamma]=\xi\in\partial_\infty M^n$
is its equivalence class, we write
\[
\lim_{t\to\infty}\gamma(t)=\xi.
\]
The ideal boundary $\partial_\infty M^n$ can be identified with the unit tangent
sphere $S_oM^n$ through the \emph{horizon map} $\textup{Hor}_o:S_oM^n\to\partial_\infty M^n$
defined by
\[
\textup{Hor}_o(z)=\lim_{t\to\infty}\gamma_{o,z}(t),\quad \forall\,z\in S_oM^n.
\]
This map is a homeomorphism (see, for instance, \cite{EO}). In particular, once
$o\in M^n$ is fixed, we may view $\partial_\infty M^n$ as the space of directions $S_oM^n$.

\subsection{Hausdorff dimension of a set in the asymptotic boundary}\label{subsect:HausdorffDimension}

Suppose now that $(M^n, g)$ has sectional curvature bounded by a negative constant $K \leq -\kappa<0$. Then $M^n$, together with the complete distance $d$ induced by its Riemannian metric $g$, is a metric space which is $\delta$-hyperbolic in the sense of Gromov. In this subsection, we follow \cite{BridsonHaefliger}.

For $o,p,q\in M^n$, the \emph{Gromov product} at $o$ is defined by
\begin{equation}\label{eq:Gromovproduct}
(p|q)_o := \tfrac12\bigl(d(o,p) + d(o,q) - d(p,q)\bigr).
\end{equation}
If $\xi,\eta\in\partial_\infty M^n$ and $o\in M^n$ is a base point, we extend this
definition in the usual way by choosing sequences (or geodesic rays, see \cite[Definition 3.15, Ch.~III]{BridsonHaefliger})
$p_i\to\xi$, $q_i\to\eta$ and setting
\[
(\xi|\eta)_o := \sup \liminf_{i\to\infty} (p_i|q_i)_o,
\]
where the supremum is taken over all such choices; this does not depend on
the particular sequences, see for instance \cite[Ch.~III]{BridsonHaefliger}. Then, it is proved in \cite{bourdon1996} that the quantity
\begin{equation}\label{eq:visualMetric}
d_o(\xi,\eta) := e^{-\sqrt{\kappa}(\xi|\eta)_o},
\qquad \xi,\eta\in\partial_\infty M^n,
\end{equation}
defines a metric on $\partial_\infty M^n$, called a \textit{visual metric}. It is known that the topology induced by $d_o$ on $\partial_\infty M^n$ coincides with the one induce by the cone topology (again, see \cite[Ch.~III]{BridsonHaefliger}).

Given any set in $\partial_\infty M^n $, we can define its Hausdorff dimension with respect to the visual metric introduced in \eqref{eq:visualMetric} in the usual way. For this notion of dimension to be meaningful, it must be independent of the choice of base point  $o \in M^n$. Since we have not found a reference in which this fact is proved in this precise form, we provide the proof in this section.

The next lemma gives the precise dependence on the base point of the Gromov
product, both in $M$ and on its boundary.

\begin{lemma}\label{lem:Gromov-basepoint}
	Let $(M^n,g)$ be a Cartan--Hadamard manifold and let $o_1,o_2\in M^n$.
	Then for all $p,q\in M^n$ one has
	\begin{equation}\label{eq:Gromov-change-base}
		\bigl|(p|q)_{o_1} - (p|q)_{o_2}\bigr| \le d(o_1,o_2),
	\end{equation}
	and for all $\xi,\eta\in\partial_\infty M^n$,
	\begin{equation}\label{eq:Gromov-change-boundary}
		\bigl|(\xi|\eta)_{o_1} - (\xi|\eta)_{o_2}\bigr| \le d(o_1,o_2).
	\end{equation}
\end{lemma}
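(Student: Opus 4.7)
The plan is to reduce everything to the triangle inequality for the distance function and then push the interior bound out to the boundary by taking limits along admissible sequences.

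First, for the interior bound \eqref{eq:Gromov-change-base}, I would write out the definition \eqref{eq:Gromovproduct} and compute
\[
(p|q)_{o_1} - (p|q)_{o_2} = \tfrac12\bigl[(d(o_1,p)-d(o_2,p)) + (d(o_1,q)-d(o_2,q))\bigr],
\]
noting that the $d(p,q)$ terms cancel. The triangle inequality gives $|d(o_1,x)-d(o_2,x)|\le d(o_1,o_2)$ for every $x\in M^n$, applied to $x=p$ and $x=q$, yielding the desired bound with the factor $\tfrac12$ absorbing the two contributions.

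For the boundary bound \eqref{eq:Gromov-change-boundary}, fix sequences $p_i\to\xi$ and $q_i\to\eta$ and apply the interior estimate pointwise in $i$:
\[
(p_i|q_i)_{o_2} - d(o_1,o_2) \;\le\; (p_i|q_i)_{o_1} \;\le\; (p_i|q_i)_{o_2} + d(o_1,o_2).
\]
Since adding a constant commutes with $\liminf$, I would pass to the $\liminf$ as $i\to\infty$ to obtain the same two-sided bound with $\liminf(p_i|q_i)_{o_j}$ in place of $(p_i|q_i)_{o_j}$. This inequality holds for \emph{every} pair of admissible sequences, so it is preserved when one takes the supremum over such pairs on each side independently; concretely, one direction gives $\sup\liminf(p_i|q_i)_{o_1}\le\sup\liminf(p_i|q_i)_{o_2}+d(o_1,o_2)$, and swapping the roles of $o_1,o_2$ gives the reverse inequality. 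By the defining formula for $(\xi|\eta)_o$ just above \eqref{eq:visualMetric}, these two inequalities are precisely \eqref{eq:Gromov-change-boundary}.

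There is no real obstacle here: the argument is purely metric and uses nothing about negative curvature beyond the fact that the Gromov product extends to $\partial_\infty M^n$ via the stated $\sup\liminf$ recipe. The only small point worth being careful about is that the uniform constant $d(o_1,o_2)$ survives both the $\liminf$ and the $\sup$, which is why one applies the two one-sided inequalities separately rather than trying to take absolute values before passing to the boundary.
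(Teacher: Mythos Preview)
Your proposal is correct and follows essentially the same approach as the paper: compute the difference of Gromov products to expose the cancellation of $d(p,q)$, apply the triangle inequality to each remaining term, and then for boundary points pass the one-sided interior bounds through $\liminf$ and $\sup$ separately before swapping the roles of $o_1$ and $o_2$.
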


\begin{proof}
	For points $p,q\in M^n$ we compute
	\begin{align*}
		(p|q)_{o_1} - (p|q)_{o_2}
		&= \tfrac12\bigl[d(o_1,p)+d(o_1,q)-d(p,q)\bigr]
		- \tfrac12\bigl[d(o_2,p)+d(o_2,q)-d(p,q)\bigr] \\
		&= \tfrac12\bigl[(d(o_1,p)-d(o_2,p)) + (d(o_1,q)-d(o_2,q))\bigr].
	\end{align*}
	Hence, using the triangle inequality,
	\begin{align*}
		\bigl|(p|q)_{o_1} - (p|q)_{o_2}\bigr|
		&\le \tfrac12\bigl(\,|d(o_1,p)-d(o_2,p)|
		+ |d(o_1,q)-d(o_2,q)|\,\bigr) \\
		&\le \tfrac12\bigl(d(o_1,o_2) + d(o_1,o_2)\bigr)
		= d(o_1,o_2),
	\end{align*}
	which proves \eqref{eq:Gromov-change-base}.
	
For boundary points $\xi,\eta\in\partial_\infty M^n$, let $(p_i)$ and $(q_i)$ be
any sequences in $M^n$ converging to $\xi$ and $\eta$, respectively.
By \eqref{eq:Gromov-change-base}, for every $i$ we have
\[
(p_i|q_i)_{o_1}\le (p_i|q_i)_{o_2}+d(o_1,o_2).
\]
Taking $\liminf_{i\to\infty}$ yields
\[
\liminf_{i\to\infty}(p_i|q_i)_{o_1}
\le
\liminf_{i\to\infty}(p_i|q_i)_{o_2}+d(o_1,o_2).
\]
Now take the supremum over all such choices of sequences $(p_i)\to\xi$ and
$(q_i)\to\eta$ to obtain
\[
(\xi|\eta)_{o_1}
=
\sup \liminf_{i\to\infty}(p_i|q_i)_{o_1}
\le
\sup \liminf_{i\to\infty}(p_i|q_i)_{o_2}+d(o_1,o_2)
=
(\xi|\eta)_{o_2}+d(o_1,o_2).
\]
Interchanging the roles of $o_1$ and $o_2$ gives also
\[
(\xi|\eta)_{o_2}\le (\xi|\eta)_{o_1}+d(o_1,o_2).
\]
Combining the two inequalities we conclude
\[
\bigl|(\xi|\eta)_{o_1}-(\xi|\eta)_{o_2}\bigr|\le d(o_1,o_2),
\]
which is \eqref{eq:Gromov-change-boundary}.
\end{proof}

As a consequence, the visual metrics associated to different base points are
bi-Lipschitz equivalent.
\begin{lemma}\label{lem:visual-bilipschitz}
Let $(M^n,g)$ be a Cartan--Hadamard manifold with sectional curvature $K\le -\kappa<0$, and let $o_1,o_2\in M^n$. Consider the visual metrics $d_{o_1}$ and $d_{o_2}$ on $\partial_\infty M^n$ based at $o_1$ and $o_2$,
respectively. Then, for all $\xi,\eta\in\partial_\infty M^n$,
\begin{equation}\label{eq:visual-bilipschitz}
e^{-\sqrt{\kappa}\, d(o_1,o_2)}\, d_{o_2}(\xi,\eta)
\;\le\;
d_{o_1}(\xi,\eta)
\;\le\;
e^{\sqrt{\kappa}\, d(o_1,o_2)}\, d_{o_2}(\xi,\eta).
\end{equation}
In particular, the identity map
\[
\mathrm{id}:(\partial_\infty M,d_{o_1})\longrightarrow (\partial_\infty M,d_{o_2})
\]
is bi-Lipschitz.
\end{lemma}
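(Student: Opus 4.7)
The plan is to promote the additive estimate on Gromov products from Lemma~\ref{lem:Gromov-basepoint} to a multiplicative estimate on the visual metrics, using that the latter are defined as exponentials of the former (with factor $-\sqrt{\kappa}$). Since there is no further geometric input needed, the argument should reduce to a short symbolic manipulation; the real content lies already in Lemma~\ref{lem:Gromov-basepoint}.

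Concretely, first I would use the definition \eqref{eq:visualMetric} to rewrite the ratio
$$\frac{d_{o_1}(\xi,\eta)}{d_{o_2}(\xi,\eta)} \;=\; \exp\!\bigl(-\sqrt{\kappa}\,\bigl[(\xi|\eta)_{o_1} - (\xi|\eta)_{o_2}\bigr]\bigr).$$
Next I would invoke \eqref{eq:Gromov-change-boundary} to obtain
$$-\sqrt{\kappa}\,d(o_1,o_2) \;\le\; -\sqrt{\kappa}\,\bigl[(\xi|\eta)_{o_1} - (\xi|\eta)_{o_2}\bigr] \;\le\; \sqrt{\kappa}\,d(o_1,o_2),$$
and exponentiating this double inequality would yield precisely the bi-Lipschitz estimate \eqref{eq:visual-bilipschitz}. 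The statement that $\mathrm{id}\colon(\partial_\infty M,d_{o_1})\to(\partial_\infty M,d_{o_2})$ is bi-Lipschitz with constant $e^{\sqrt{\kappa}\,d(o_1,o_2)}$ is then immediate from the definition of bi-Lipschitz equivalence. There is essentially no obstacle here: all the geometric work (the triangle inequality in $M^n$ and the passage to boundary sequences) was already carried out in Lemma~\ref{lem:Gromov-basepoint}, so this lemma should be viewed as a clean corollary rather than an independent argument.
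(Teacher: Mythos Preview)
Your proposal is correct and follows exactly the same route as the paper: both invoke Lemma~\ref{lem:Gromov-basepoint} to bound the difference of Gromov products, then multiply by $\sqrt{\kappa}$ and exponentiate to convert the additive estimate into the multiplicative one in \eqref{eq:visual-bilipschitz}. The only cosmetic difference is that you phrase it via the ratio $d_{o_1}/d_{o_2}$ while the paper writes out the chain of inequalities directly.
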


\begin{proof}
By Lemma~\ref{lem:Gromov-basepoint},
	\[
	-(\xi|\eta)_{o_2} - d(o_1,o_2)
	\;\le\;
	-(\xi|\eta)_{o_1}
	\;\le\;
	-(\xi|\eta)_{o_2} + d(o_1,o_2),
	\]
for all $\xi,\eta\in\partial_\infty M^n$. Multiplying by $\sqrt{\kappa}>0$ and exponentiating, we obtain
    \[
	e^{-\sqrt{\kappa} d(o_1,o_2)}\, e^{-\sqrt{\kappa}(\xi|\eta)_{o_2}}
	\;\le\;
	e^{-\sqrt{\kappa}(\xi|\eta)_{o_1}}
	\;\le\;
	e^{\sqrt{\kappa} d(o_1,o_2)}\, e^{-\sqrt{\kappa}(\xi|\eta)_{o_2}},
	\]
which is exactly \eqref{eq:visual-bilipschitz}, since
	$d_o(\xi,\eta) = e^{-\sqrt{\kappa}(\xi|\eta)_o}$.
\end{proof}
We now recall the basic fact that Hausdorff dimension is invariant under
bi-Lipschitz changes of metric. We include the short argument for
completeness.
\begin{lemma}\label{lem:HD-bilipschitz}
	Let $(X,d_1)$ and $(X,d_2)$ be metric spaces on the same underlying set $X$,
	and assume there exists $L\ge 1$ such that
	\[
	L^{-1} d_1(p,q) \le d_2(p,q) \le L\, d_1(p,q),
	\qquad \forall p,q\in X.
	\]
	Then for every $A\subset X$,
	\[
	\dim_H(A,(X,d_1)) = \dim_H(A,(X,d_2)).
	\]
\end{lemma}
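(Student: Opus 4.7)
The plan is to show that the Hausdorff $s$-measures $\mathcal H^s(A,d_1)$ and $\mathcal H^s(A,d_2)$ are comparable up to a multiplicative factor depending only on $L$ and $s$, which forces them to be simultaneously zero or infinite at every $s\ge 0$; the critical exponent defining the Hausdorff dimension is therefore the same in both metrics.

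First I would recall the construction of Hausdorff measure: for $s\ge 0$ and $\delta>0$, set
\[
\mathcal H^s_\delta(A,d) := \inf\Bigl\{\sum_{i} (\mathrm{diam}_d U_i)^s : A\subset \bigcup_i U_i,\ \mathrm{diam}_d U_i\le \delta\Bigr\},
\]
and $\mathcal H^s(A,d):=\lim_{\delta\to 0^+}\mathcal H^s_\delta(A,d)$. Then $\dim_H(A,(X,d))$ is the unique threshold $s_0\ge 0$ such that $\mathcal H^s(A,d)=\infty$ for $s<s_0$ and $\mathcal H^s(A,d)=0$ for $s>s_0$. Next, I would use the bi-Lipschitz assumption to compare diameters of an arbitrary subset $U\subset X$:
\[
L^{-1}\,\mathrm{diam}_{d_1}(U)\ \le\ \mathrm{diam}_{d_2}(U)\ \le\ L\,\mathrm{diam}_{d_1}(U).
\]
Consequently any countable cover $\{U_i\}$ of $A$ with $\mathrm{diam}_{d_1}U_i\le\delta$ is automatically a cover with $\mathrm{diam}_{d_2}U_i\le L\delta$, and satisfies $\sum_i(\mathrm{diam}_{d_2}U_i)^s\le L^s\sum_i(\mathrm{diam}_{d_1}U_i)^s$. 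Taking the infimum over such covers yields $\mathcal H^s_{L\delta}(A,d_2)\le L^s\,\mathcal H^s_\delta(A,d_1)$, and passing to the limit $\delta\to 0^+$ gives $\mathcal H^s(A,d_2)\le L^s\,\mathcal H^s(A,d_1)$. The symmetric inequality follows by swapping the roles of the two metrics.

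Finally, since $L^s<\infty$ for every $s\ge 0$, the two Hausdorff measures vanish together and are infinite together, so the threshold $s_0$ is the same for both, proving $\dim_H(A,(X,d_1))=\dim_H(A,(X,d_2))$. The argument is a direct consequence of the definitions, and there is no substantive obstacle; the lemma is included only because it is needed to make sense of $\dim_H(\partial_\infty\Omega)$ independently of the base point chosen in Lemma~\ref{lem:visual-bilipschitz}.
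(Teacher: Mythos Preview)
Your proof is correct and follows essentially the same approach as the paper: both compare Hausdorff premeasures by noting that a $\delta$-cover in one metric is an $L\delta$-cover in the other, then pass to the limit and swap the roles of $d_1$ and $d_2$. The only cosmetic difference is that the paper works with covers by balls while you use arbitrary sets of bounded diameter, and you derive the full multiplicative comparison $\mathcal H^s(A,d_2)\le L^s\,\mathcal H^s(A,d_1)$ whereas the paper only records the implication $\mathcal H^s_{d_1}(A)=0\Rightarrow \mathcal H^s_{d_2}(A)=0$; both yield the same conclusion.
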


\begin{proof}
	Let $s\ge 0$ and denote by $\mathcal{H}^s_{d_i}$ the $s$-dimensional
	Hausdorff measure on $(X,d_i)$, $i=1,2$. Also, given $p \in X$ and $r>0$ denote by $B_{d_i} (p,r) \subset X$ the metric ball centered at $p$ and of radius $r$ with respect to the metric $d_i$. 
	
	Suppose first that
	$\mathcal{H}^s_{d_1}(A)=0$. Given $\delta>0$, any cover
	$A\subset\bigcup_j B_{d_1}(p_j,r_j)$ with $r_j<\delta$ is also a cover by
	$d_2$-balls $B_{d_2}(p_j,L r_j)$, so
	\[
	\sum_j (2L r_j)^s \ge \mathcal{H}^s_{d_2,\;2L\delta}(A),
	\]
	where $\mathcal{H}^s_{d_2,\rho}$ denotes the outer premeasure defined using
	covers by sets of $d_2$-diameter $<\rho$. Taking the infimum over all such
	covers and letting $\delta\to 0$, we obtain $\mathcal{H}^s_{d_2}(A)=0$.
	Thus
	\[
	\dim_H(A,(X,d_2)) \le \dim_H(A,(X,d_1)).
	\]
	Interchanging the roles of $d_1$ and $d_2$ gives the reverse inequality,
	and the claimed equality of dimensions follows.
\end{proof}

Combining the previous lemmas we obtain the desired independence of the
base point.

\begin{proposition}\label{prop:HD-indep-basepoint}
	For any $o\in M^n$ let $d_o$ be the visual metric on $\partial_\infty M^n$
	based at $o$, defined in \eqref{eq:visualMetric}, and for $X\subset\partial_\infty M^n$ let
	\[
	\dim_H^{(o)}(X)
	:= \dim_H\bigl(X,(\partial_\infty M, d_o)\bigr).
	\]
Then for any $o_1,o_2\in M^n$ and any $X\subset\partial_\infty M^n$ one has
	\[
	\dim_H^{(o_1)}(X) = \dim_H^{(o_2)}(X).
	\]
	In particular, the Hausdorff dimension of a subset of $\partial_\infty M^n$ does
	not depend on the choice of base point used to define the visual metric.
\end{proposition}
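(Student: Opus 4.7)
The proof plan is essentially to observe that the statement is an immediate corollary of the two preceding lemmas, so the real work is already done. The strategy is to apply Lemma~\ref{lem:visual-bilipschitz} to reduce the independence of base point to a bi-Lipschitz comparison of metrics on $\partial_\infty M^n$, and then invoke Lemma~\ref{lem:HD-bilipschitz} to conclude that Hausdorff dimension is preserved.

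More concretely, fix two base points $o_1,o_2\in M^n$ and set
\[
L := e^{\sqrt{\kappa}\,d(o_1,o_2)} \ge 1.
\]
By Lemma~\ref{lem:visual-bilipschitz}, the visual metrics $d_{o_1}$ and $d_{o_2}$ on $\partial_\infty M^n$ satisfy
\[
L^{-1}\, d_{o_1}(\xi,\eta) \;\le\; d_{o_2}(\xi,\eta) \;\le\; L\, d_{o_1}(\xi,\eta),
\]
for all $\xi,\eta\in\partial_\infty M^n$, so the identity map between $(\partial_\infty M^n, d_{o_1})$ and $(\partial_\infty M^n, d_{o_2})$ is bi-Lipschitz with constant $L$. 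Applying Lemma~\ref{lem:HD-bilipschitz} with $X=\partial_\infty M^n$, $d_1=d_{o_1}$, $d_2=d_{o_2}$, and the same constant $L$, we conclude that for every $X\subset\partial_\infty M^n$,
\[
\dim_H^{(o_1)}(X) \;=\; \dim_H\bigl(X,(\partial_\infty M^n, d_{o_1})\bigr)
\;=\; \dim_H\bigl(X,(\partial_\infty M^n, d_{o_2})\bigr) \;=\; \dim_H^{(o_2)}(X),
\]
which is exactly the claim. Since $o_1,o_2$ were arbitrary, this proves that $\dim_H^{(o)}(X)$ is independent of $o$.

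There is no genuine obstacle here, since the substance of the argument is already contained in Lemmas~\ref{lem:visual-bilipschitz} and~\ref{lem:HD-bilipschitz}. The only thing worth being careful about is that the bi-Lipschitz constant $L$ depends on the pair $(o_1,o_2)$, but this is irrelevant for the conclusion because Lemma~\ref{lem:HD-bilipschitz} allows any finite $L\ge 1$, and the equality of Hausdorff dimensions is applied separately for each fixed pair of base points.
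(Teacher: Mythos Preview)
Your proposal is correct and follows essentially the same approach as the paper: invoke Lemma~\ref{lem:visual-bilipschitz} to get that the identity map between $(\partial_\infty M^n,d_{o_1})$ and $(\partial_\infty M^n,d_{o_2})$ is bi-Lipschitz, and then apply Lemma~\ref{lem:HD-bilipschitz} to conclude. The only difference is cosmetic: you make the bi-Lipschitz constant $L=e^{\sqrt{\kappa}\,d(o_1,o_2)}$ explicit, whereas the paper simply cites the bi-Lipschitz conclusion of Lemma~\ref{lem:visual-bilipschitz}.
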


\begin{proof}
	By Lemma~\ref{lem:visual-bilipschitz}, the identity map
	\[
	\mathrm{id} : (\partial_\infty M^n, d_{o_1})
	\to (\partial_\infty M^n, d_{o_2})
	\]
	is bi-Lipschitz. Lemma~\ref{lem:HD-bilipschitz} applied with
	$X=\partial_\infty M^n$, $d_1=d_{o_1}$ and $d_2=d_{o_2}$
	then gives
	\[
	\dim_H^{(o_1)}(X) = \dim_H^{(o_2)}(X)
	\]
	for every $X\subset\partial_\infty M^n$.
\end{proof}

For a domain $\Omega\subset M^n$ we define its asymptotic boundary as
\[
\partial_\infty\Omega := \overline{\Omega} \cap\partial_\infty M^n,
\]
where the closure is taken in the cone topology on $M^n\cup\partial_\infty M^n$.
Given $o\in M^n$, we set
\begin{equation}\label{eq:HD-visual}
\dim_H(\partial_\infty\Omega)
:= \dim_H\bigl(\partial_\infty\Omega,
(\partial_\infty M^n,d_o)\bigr).
\end{equation}
Proposition~\ref{prop:HD-indep-basepoint} shows that this quantity is well defined,
namely it does not depend on the choice of the base point $o$.

We finish this section by relating the visual distance between two points in
$\partial_\infty M^n$ with the angle between the corresponding initial directions at $o$.

\begin{lemma}\label{lemmaDimension}
Let $(M^n,g)$ be Cartan--Hadamard with bounded sectional curvature $K\le -\kappa<0$.
Consider $\xi,\eta\in\partial_\infty M^n$ and write
$\theta=\measuredangle_o(\gamma_{o,\xi}'(0),\gamma_{o,\eta}'(0))$. Then, defining $d_o$ as in
\eqref{eq:visualMetric},
\[
\sin(\theta/2)\le d_o(\xi,\eta).
\]
\end{lemma}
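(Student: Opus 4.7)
The plan is to upper bound the Gromov product $(\xi|\eta)_o$ by $-\tfrac{1}{\sqrt{\kappa}}\log\sin(\theta/2)$ using the hinge version of the Alexandrov--Toponogov comparison against the simply connected space form of constant sectional curvature $-\kappa$. The case $\theta=0$ is trivial, since by Cartan--Hadamard equal initial directions force $\xi=\eta$ and both sides of the inequality vanish; I assume $\theta\in(0,\pi]$ throughout.

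I would fix arbitrary sequences $p_i\to\xi$ and $q_i\to\eta$ in the cone topology and set $s_i:=d(o,p_i)$, $t_i:=d(o,q_i)$, $\alpha_i:=\measuredangle_o(\gamma_{o,p_i}'(0),\gamma_{o,q_i}'(0))$. Cone convergence together with the horizon map being a homeomorphism forces $s_i,t_i\to\infty$ and $\alpha_i\to\theta$. Since $M^n$ is simply connected with $K\le -\kappa<0$, it is globally CAT($-\kappa$), so the hinge form of the comparison theorem at the vertex $o$ yields
\[
d(p_i,q_i)\ge \bar d_i,
\]
where $\bar d_i$ is the opposite-side length of the comparison hinge in the model of curvature $-\kappa$ with legs $s_i,t_i$ and opening $\alpha_i$. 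The hyperbolic law of cosines in this model reads
\[
\cosh(\sqrt{\kappa}\,\bar d_i)=\cosh\bigl(\sqrt{\kappa}(s_i-t_i)\bigr)+2\sinh(\sqrt{\kappa}s_i)\sinh(\sqrt{\kappa}t_i)\sin^2(\alpha_i/2),
\]
and since $\min(s_i,t_i)\to\infty$ the second summand dominates and is asymptotic to $\tfrac12 e^{\sqrt{\kappa}(s_i+t_i)}\sin^2(\alpha_i/2)$, whence $\bar d_i=s_i+t_i+\tfrac{2}{\sqrt{\kappa}}\log\sin(\alpha_i/2)+o(1)$ as $i\to\infty$.

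Plugging this into the Gromov-product formula \eqref{eq:Gromovproduct} yields
\[
(p_i|q_i)_o=\tfrac12\bigl(s_i+t_i-d(p_i,q_i)\bigr)\le \tfrac12(s_i+t_i-\bar d_i)\xrightarrow[i\to\infty]{}-\frac{1}{\sqrt{\kappa}}\log\sin(\theta/2),
\]
so $\liminf_i(p_i|q_i)_o\le -\tfrac{1}{\sqrt{\kappa}}\log\sin(\theta/2)$. Taking the supremum in the definition of $(\xi|\eta)_o$ and exponentiating as in \eqref{eq:visualMetric} produces the desired inequality $d_o(\xi,\eta)\ge \sin(\theta/2)$. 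The main subtlety I anticipate is that $(\xi|\eta)_o$ is a $\sup\liminf$ over \emph{all} approximating sequences, so the estimate must be checked uniformly; the hinge form of Alexandrov comparison is precisely what handles asymmetric choices $s_i\neq t_i$, and the cone topology provides the precise inputs $\min(s_i,t_i)\to\infty$ and $\alpha_i\to\theta$ needed to extract the leading-order asymptotics of $\bar d_i$ in the model.
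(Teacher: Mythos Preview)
Your proof is correct and follows the same route as the paper's: Toponogov comparison against the model $\mathbb{H}^n(-\kappa)$, hyperbolic trigonometry to extract the leading asymptotics of the comparison distance, and passage to the limit in the Gromov product. The only difference is cosmetic---the paper uses the symmetric sequences $\gamma_{o,\xi}(j),\gamma_{o,\eta}(j)$ together with the hyperbolic law of sines, whereas you treat arbitrary approximating sequences via the law of cosines, which has the minor advantage of addressing the $\sup\liminf$ in the definition of $(\xi|\eta)_o$ directly.
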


\begin{proof}
Write $\gamma_1:=\gamma_{o,\xi}$ and $\gamma_2:=\gamma_{o,\eta}$, and consider geodesic rays
$\tilde\gamma_1^\kappa,\tilde\gamma_2^\kappa:[0,+\infty)\to\mathbb{H}^n(-\kappa)$ with
$\tilde\gamma_1^\kappa(0)=\tilde\gamma_2^\kappa(0)$ and
$\measuredangle_{\tilde\gamma_1^\kappa(0)}((\tilde\gamma_1^\kappa)'(0),(\tilde\gamma_2^\kappa)'(0))=\theta$.
Define
\[
L(t):=d(\gamma_1(t),\gamma_2(t)),\qquad
L_\kappa(t):=\tilde d_\kappa(\tilde\gamma_1^\kappa(t),\tilde\gamma_2^\kappa(t)),\qquad t>0,
\]
where $\tilde d_\kappa$ is the distance in $\mathbb{H}^n(-\kappa)$.
By \eqref{eq:Gromovproduct} and Toponogov's theorem,
\begin{equation}\label{eq:boundGromovProduct}
(\gamma_1(j)\mid\gamma_2(j))_o \le j-\frac{1}{2}L_\kappa(j),\qquad \forall\,j\in\mathbb{N}.
\end{equation}
The hyperbolic law of sines gives
\[
\sinh\!\left(\frac{\sqrt\kappa}{2}L_\kappa(j)\right)=\sin(\theta/2)\,\sinh(j\sqrt\kappa),
\]
hence
\[
L_\kappa(j)=2j+\frac{2}{\sqrt\kappa}\log(\sin(\theta/2))+O(1),
\]
with $O(1)\to 0$ as $j\to+\infty$. Substituting into \eqref{eq:boundGromovProduct}, taking limits,
multiplying by $-\sqrt\kappa$ and exponentiating yields $\sin(\theta/2)\le d_o(\xi,\eta)$.
\end{proof}

In the model case of the hyperbolic space $\mathbb{H}^n(-\kappa)$, the above lemma is sharp:
for any $\xi,\eta\in\partial_\infty\mathbb{H}^n(-\kappa)$ one has
\[
d_o(\xi,\eta)=\sin(\theta/2),
\qquad
\theta=\measuredangle_o(\gamma_{o,\xi}'(0),\gamma_{o,\eta}'(0)).
\]
In particular, the notion of Hausdorff dimension on the ideal boundary used in
\cite{BonorinoKlaser} agrees with the one in \eqref{eq:HD-visual}.

\subsection{Viscosity solutions on Riemannian manifolds}\label{subsection:viscosity}

In this subsection we recall the notion of viscosity solution on Riemannian manifolds and record comparison (maximum) principles suited to the semilinear equation
\[
\Delta u + f(u)=0
\]
posed on domains of Riemannian manifolds, in the geometric framework of this work.

\begin{definition}
Let $(M^n,g)$ be a smooth Riemannian manifold and $\Omega\subset M^n$ a bounded domain (possibly with boundary). Let $f\in C^0(\mathbb{R})$. A function $u\in C^0(\Omega)$ is a \emph{viscosity subsolution} of $\Delta u + f(u)=0$ in $\Omega$ if for every $p_0\in\Omega$ and every $\varphi\in C^2$ defined near $p_0$ such that $u-\varphi$ attains a local maximum at $p_0$, one has
\[
\Delta \varphi(p_0) + f\big(u(p_0)\big)\le 0.
\]
A \emph{viscosity supersolution} is defined analogously, with local minimum and ``$\ge 0$''. A \emph{viscosity solution} is both a sub- and a supersolution.

For Dirichlet data $u=\psi$ on $\partial\Omega$, we use the standard viscosity interpretation on the boundary: $u$ is a subsolution (resp.\ supersolution) in $\Omega$ and satisfies $\limsup_{p\to p_0}u(p)\le \psi(p_0)$ (resp.\ $\liminf_{p\to p_0}u(p)\ge \psi(p_0)$) for all $p_0\in\partial\Omega$.
\end{definition}

We state a standard comparison principle under \emph{properness} in the sense of \cite[Definition~2.16]{AzagraFerreraSanz2008}. Other hypotheses on $f$ also yield maximum principles for viscosity solutions, for instance Lipschitz-type conditions based on a spectral gap (see \cite{AzagraFerreraSanz2008,BNV1994}). In the semilinear setting relevant to this paper, one has the following.

\begin{theorem}[\cite{AzagraFerreraSanz2008,PengZhou2008}]\label{thm:manifoldmaximum}
Let $\Omega\subset M^n$ be a bounded domain with smooth boundary. Assume that $f\in C^0(\mathbb{R})$ is nondecreasing. Let $u\in C^2(\Omega)\cap C^0(\overline{\Omega})$ be a classical subsolution of
\[
\Delta u+f(u)=0 \quad \text{in }\Omega,
\]
and let $v\in C^0(\overline{\Omega})$ be a viscosity supersolution of the same equation. If $u\le v$ on $\partial\Omega$ (in the classical/viscosity sense), then $u\le v$ in $\Omega$.
\end{theorem}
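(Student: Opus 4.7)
The plan is to argue by contradiction and exploit the fact that the classical subsolution $u$ can itself serve as a $C^2$ test function from below against the viscosity supersolution $v$. Suppose $M_0:=\max_{\overline{\Omega}}(u-v)>0$. By continuity and compactness $M_0$ is attained at some $p_0\in\overline{\Omega}$, and the boundary hypothesis $u\le v$ on $\partial\Omega$ forces $p_0\in\Omega$. Then $\varphi:=u-M_0\in C^2(\Omega)$ touches $v$ from below at $p_0$ (i.e.\ $v-\varphi\ge 0$ with equality at $p_0$), so the viscosity supersolution property yields $\Delta u(p_0)+f(v(p_0))\ge 0$, while the classical subsolution inequality at $p_0$ reads $\Delta u(p_0)+f(u(p_0))\le 0$. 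Subtracting gives $f(u(p_0))\le f(v(p_0))$; combined with $u(p_0)>v(p_0)$ and the monotonicity of $f$ this forces $f(u(p_0))=f(v(p_0))$, which is not yet a contradiction since $f$ may be flat on $[v(p_0),u(p_0)]$.

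To force strictness, I would perturb $v$ by a positive strictly subharmonic function. Fix $h\in C^2(\overline{\Omega})$ with $h>0$ and $\Delta h\ge\delta>0$ on $\overline{\Omega}$; in the Cartan--Hadamard setting of the paper one may simply take $h(p)=\cosh\!\bigl(\lambda\,d(p,q)\bigr)$ for any $q\in M^n\setminus\overline{\Omega}$ and $\lambda>0$, since $\Delta d(\cdot,q)\ge 0$ on Cartan--Hadamard manifolds gives $\Delta h\ge\lambda^2 h\ge\lambda^2>0$. Set $v_\eta:=v+\eta h$ for $\eta>0$. If $\phi\in C^2$ touches $v_\eta$ from below at a point $p$, then $\phi-\eta h$ touches $v$ from below at $p$, so the viscosity supersolution property for $v$ gives $\Delta\phi(p)-\eta\Delta h(p)+f(v(p))\ge 0$; since $v(p)=v_\eta(p)-\eta h(p)\le v_\eta(p)$ and $f$ is nondecreasing, $f(v(p))\le f(v_\eta(p))$, whence
\[
\Delta\phi(p)+f(v_\eta(p))\;\ge\;\eta\,\Delta h(p)\;\ge\;\eta\delta\;>\;0.
\]
Thus $v_\eta$ is a \emph{strict} viscosity supersolution.

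For $\eta>0$ small enough, $u-v_\eta$ still has a positive interior maximum: on $\partial\Omega$ one has $u-v_\eta\le -\eta h<0$, while $u(p_0)-v_\eta(p_0)\ge M_0-\eta\max_{\overline{\Omega}}h>0$. Let $p_\eta\in\Omega$ be a maximum point. Rerunning the test-function argument at $p_\eta$ with $v_\eta$ in place of $v$ gives $\Delta u(p_\eta)+f(v_\eta(p_\eta))\ge\eta\delta$ from the strict supersolution and $\Delta u(p_\eta)+f(u(p_\eta))\le 0$ from the classical subsolution, so
\[
f(u(p_\eta))-f(v_\eta(p_\eta))\;\le\;-\eta\delta\;<\;0,
\]
which contradicts the monotonicity inequality $f(u(p_\eta))\ge f(v_\eta(p_\eta))$ coming from $u(p_\eta)>v_\eta(p_\eta)$.

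The main obstacle is precisely the absence of strict monotonicity of $f$, which is what prevents the one-shot comparison at $p_0$ from closing. The strictly subharmonic perturbation $v\mapsto v+\eta h$ is the standard device that upgrades the viscosity supersolution inequality to a strict one, after which the comparison at the interior maximum closes immediately. The remainder is routine bookkeeping of viscosity test functions on the manifold (for which \cite{AzagraFerreraSanz2008,PengZhou2008} provide the ambient framework) together with a continuity/compactness argument to locate $p_\eta$.
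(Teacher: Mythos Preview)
The paper does not actually prove this theorem; it is stated with attribution to \cite{AzagraFerreraSanz2008,PengZhou2008} and invoked as a black box, so there is no in-paper argument against which to compare your attempt.

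That said, your strategy is the standard one and is sound. Because $u\in C^2$, the shifted function $u-M_0$ is a legitimate $C^2$ test function touching $v$ from below at the interior maximum, which is exactly what lets you bypass the doubling-of-variables machinery; the perturbation $v\mapsto v+\eta h$ with $h>0$ and a strict sign on $\Delta h$ is the classical device to break ties when $f$ is only weakly monotone, and your choice $h=\cosh(\lambda\,d(\cdot,q))$ with $q\notin\overline{\Omega}$ is fine on a Cartan--Hadamard manifold. One caution on bookkeeping: the inequalities in your write-up follow the sign convention recorded in the paper's viscosity definition (subsolution with ``$\le 0$'', supersolution with ``$\ge 0$''), which is the reverse of the usual Crandall--Ishii--Lions convention; under the standard signs properness reads $f$ \emph{nonincreasing} and the perturbation $h$ should be strictly \emph{super}harmonic rather than subharmonic. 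The logic of your argument survives this flip intact, but it is worth cross-checking the direction of monotonicity and the sign of $\Delta h$ against the original sources before finalizing.
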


Finally, we record a viscosity version of the variational estimate for the first Dirichlet eigenvalue.

\begin{proposition}[\cite{BNV1994,QuaasSirakov2006,QuaasSirakov2008}]\label{prop:Lambda}
Let $\Omega\subset M^n$ be a bounded domain with smooth boundary, and let
$\lambda_1(\Omega)$ denote the first Dirichlet eigenvalue of $\Delta$ on $\Omega$.
Assume there exist $\lambda\in\mathbb R$ and a function
$v\in C^0(\overline\Omega)$ with $v>0$ in $\Omega$ and $v=0$ on $\partial\Omega$
such that
\[
\Delta v+\lambda v\ge 0 \quad \text{in }\Omega
\]
in the viscosity sense (i.e.\ $v$ is a viscosity supersolution of
$\Delta w+\lambda w=0$). Then $\lambda_1(\Omega)\le \lambda$.
\end{proposition}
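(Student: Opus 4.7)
The statement is one direction of the Berestycki--Nirenberg--Varadhan characterization of the principal Dirichlet eigenvalue, extended to the viscosity setting in \cite{QuaasSirakov2006,QuaasSirakov2008}. My plan is to argue by contradiction via a \emph{sliding} argument against the first Dirichlet eigenfunction $\phi_1$, using a small constant shift to force the touching point to lie in the interior of $\Omega$, so that the viscosity supersolution condition can be applied.

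Let $\phi_1\in C^\infty(\overline\Omega)$ be the first Dirichlet eigenfunction of $\Delta$ on $\Omega$, normalized so that $\phi_1>0$ in $\Omega$, $\phi_1=0$ on $\partial\Omega$, and $\Delta\phi_1+\lambda_1(\Omega)\phi_1=0$; its existence and smoothness up to $\partial\Omega$ are standard for smooth bounded domains in Riemannian manifolds, and $\lambda_1(\Omega)>0$ by the Poincaré inequality. Assume for contradiction that $\lambda<\lambda_1(\Omega)$. Fix $\varepsilon>0$ and set
\[
T_\varepsilon:=\sup\{\,t\ge 0\;:\; t\phi_1\le v+\varepsilon\text{ on }\overline\Omega\,\}.
\]
Since $v+\varepsilon\ge\varepsilon>0$ on $\overline\Omega$ and $\phi_1$ is bounded, one has $0<T_\varepsilon<\infty$, and by continuity there exists $p_\varepsilon\in\overline\Omega$ realizing the supremum, i.e.\ $T_\varepsilon\phi_1(p_\varepsilon)=v(p_\varepsilon)+\varepsilon$ and $T_\varepsilon\phi_1\le v+\varepsilon$ elsewhere. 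On $\partial\Omega$ we have $T_\varepsilon\phi_1=0<\varepsilon=v+\varepsilon$, so the touching point $p_\varepsilon$ lies in the interior $\Omega$.

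Now test against $v$. The smooth function $\varphi:=T_\varepsilon\phi_1-\varepsilon$ satisfies $v-\varphi\ge 0$ with equality at $p_\varepsilon$, so $v-\varphi$ attains a local minimum at $p_\varepsilon\in\Omega$. The viscosity supersolution condition applied to the equation $\Delta w+\lambda w=0$ yields
\[
\Delta\varphi(p_\varepsilon)+\lambda\,v(p_\varepsilon)\ge 0.
\]
Using $\Delta\varphi=T_\varepsilon\Delta\phi_1=-T_\varepsilon\lambda_1(\Omega)\phi_1$ together with the equality $T_\varepsilon\phi_1(p_\varepsilon)=v(p_\varepsilon)+\varepsilon$, this simplifies to
\[
(\lambda-\lambda_1(\Omega))\,v(p_\varepsilon)\;\ge\;\lambda_1(\Omega)\,\varepsilon.
\]
The right-hand side is strictly positive, while the left-hand side is non-positive since $\lambda-\lambda_1(\Omega)<0$ and $v(p_\varepsilon)>0$. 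This contradiction shows that $\lambda_1(\Omega)\le\lambda$.

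The main delicacy is ensuring that $p_\varepsilon$ lies in the interior of $\Omega$, where the viscosity definition can be invoked. The constant shift by $\varepsilon$ is precisely what forces strict inequality on $\partial\Omega$ and therefore interior touching, sidestepping the need for a Hopf-type lower bound of the form $v\ge c\,\phi_1$ near $\partial\Omega$, which is not a priori available for a merely continuous viscosity supersolution. Alternatively, one could simply quote the viscosity BNV--Quaas--Sirakov characterization of $\lambda_1(\Omega)$ directly; in our Riemannian framework this follows from the comparison principle of Theorem~\ref{thm:manifoldmaximum} together with the standard localization of viscosity arguments to charts, as set up in \cite{AzagraFerreraSanz2008}.
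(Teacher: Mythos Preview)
The paper does not prove this proposition at all: it is stated with the citations \cite{BNV1994,QuaasSirakov2006,QuaasSirakov2008} and used as a black box. Your argument, by contrast, supplies a complete self-contained proof via a sliding comparison against the first Dirichlet eigenfunction, and it is correct. The $\varepsilon$-shift is exactly the right device here: it guarantees strict inequality on $\partial\Omega$ and hence an interior touching point, at which the viscosity supersolution inequality (in the paper's convention from Subsection~\ref{subsection:viscosity}) can be invoked; the resulting inequality $(\lambda-\lambda_1(\Omega))\,v(p_\varepsilon)\ge \lambda_1(\Omega)\,\varepsilon$ then contradicts $\lambda<\lambda_1(\Omega)$ since $v(p_\varepsilon)>0$ and $\lambda_1(\Omega)>0$. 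This is a standard and clean instance of the BNV sliding method, and your observation that the shift avoids any Hopf-type control on $v$ near $\partial\Omega$ is the key point that makes the argument work for merely continuous viscosity supersolutions.
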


\section{Construction of super-solutions}\label{Distance}

Let $\Sigma^{n-1}\subset M^n$ be a $C^k$ properly embedded hypersurface. Then $\Sigma^{n-1}$ is orientable and separates $M^n$ into two connected components, denoted by $M^+$ and $M^-$. Let $N$ be the unit normal vector field along $\Sigma^{n-1}$ pointing into $M^+$, and let $d(\cdot,\cdot)$ be the Riemannian distance on $M^n$. We write
\[
d(p,\Sigma^{n-1}):=\inf_{q\in\Sigma^{n-1}} d(p,q)
\]
for the (unsigned) distance from $p$ to $\Sigma^{n-1}$, and define the \emph{signed distance function} $d_{\Sigma^{n-1}}:M^n\to\mathbb{R}$ by
\[
d_{\Sigma^{n-1}}(p):=
\begin{cases}
d(p,\Sigma^{n-1}), & \text{if } p\in M^+,\\
-d(p,\Sigma^{n-1}), & \text{if } p\in M^-.
\end{cases}
\]
Thus $d_{\Sigma^{n-1}}$ encodes the choice of orientation induced by $N$, while $d(p,\Sigma^{n-1})$ records only the Riemannian distance.

We follow the framework of~\cite{GhomiSpruck, MantegazzaMennucci}. Given $p\in M^n$, a point $p'\in\Sigma^{n-1}$ is called a \emph{footprint} of $p$ on $\Sigma$ if $d(p,p')=|d_\Sigma(p)|$ and the minimizing geodesic joining $p$ to $p'$ is unique. Let $\mathrm{reg}(d_{\Sigma^{n-1}})$ be the union of all open subsets of $M^n$ on which every point admits a unique footprint on $\Sigma^{n-1}$. The \emph{cut locus} of $\Sigma^{n-1}$ is then
\[
\mathrm{Cut}(\Sigma^{n-1}):=M^n\setminus \mathrm{reg}(d_{\Sigma^{n-1}}).
\]

Let $h:\mathbb{R}\to\mathbb{R}$ be smooth and set $v:=h\circ d_{\Sigma^{n-1}}$. By Lemmas 2.3 and 2.5 in~\cite{GhomiSpruck} and Corollary 4.12 in~\cite{MantegazzaMennucci}, we have
\[
v\in C^k\!\bigl(M^n\setminus \mathrm{Cut}(\Sigma^{n-1})\bigr)\cap \mathrm{Lip}(M^n),
\]
and the cut locus $\mathrm{Cut}(\Sigma^{n-1})$ has Hausdorff dimension at most $n-1$.

We now compute the Laplacian of $v$ at regular points, following the approach of G\'alvez--Lozano~\cite{GalvezLozano} and Choi~\cite{Choi}. Let $p\in M^n\setminus \mathrm{Cut}(\Sigma^{n-1})$ be a regular point. Then there exists a unique unit-speed minimizing geodesic $\gamma:[0,L]\to M^n$ such that $\gamma(0)=p'\in\Sigma^{n-1}$ and $\gamma(L)=p$, where $p'$ is the footprint of $p$ on $\Sigma^{n-1}$. Since $v=h\circ d_{\Sigma^{n-1}}$, we have
\[
\nabla v(p)=h'(L)\,\dot\gamma(L).
\]
Moreover, for $X,Y\in T_pM^n$,
\begin{align*}
\nabla^2 v(X,Y)
&=X\!\big(h'(L)\big)\,\langle \nabla d, Y\rangle + h'(L)\,\langle \nabla_X\nabla d, Y\rangle \\
&=h''(L)\,\langle \nabla d, X\rangle\langle \nabla d, Y\rangle + h'(L)\,\nabla^2 d(X,Y).
\end{align*}

Let $J$ be a Jacobi field along $\gamma$ with $\langle J(t),\dot\gamma(t)\rangle=0$. Then
\begin{align*}
\nabla^2 d(J(t),J(t))
&=\langle \nabla_{J(t)}\nabla d, J(t)\rangle \\
&=\langle \nabla_{\dot\gamma(t)}J(t), J(t)\rangle
=\langle J'(t),J(t)\rangle,
\end{align*}
where $J'=\nabla_{\dot\gamma}J$. Integrating, we obtain
\begin{align*}
\langle J'(t),J(t)\rangle
&=\langle J'(0),J(0)\rangle + \int_0^t \frac{d}{ds}\langle J'(s),J(s)\rangle\,ds \\
&=\langle J'(0),J(0)\rangle + \int_0^t \big( |J'(s)|^2 + \langle J''(s),J(s)\rangle \big)\,ds \\
&=\langle J'(0),J(0)\rangle + \int_0^t \Big( |J'(s)|^2 - \langle R(J(s),\dot\gamma(s))\dot\gamma(s),J(s)\rangle \Big)\,ds,
\end{align*}
where $R$ denotes the Riemann curvature tensor.
In particular,
\begin{equation}\label{Hess}
\left\{
\begin{aligned}
\nabla^2 v(J(L),J(L)) &= h'(L)\Big(\langle J'(0),J(0)\rangle + \mathcal{I}_L(J,J)\Big),\\
\nabla^2 v(\dot\gamma(L),\dot\gamma(L)) &= h''(L),
\end{aligned}
\right.
\end{equation}
where
\[
\mathcal{I}_L(J,J):=\int_0^L \Big( |J'(t)|^2-\langle R(J(t),\dot\gamma(t))\dot\gamma(t),J(t)\rangle \Big)\,dt
\]
is the \emph{index form} associated to $J$ along $\gamma$.

Finally, in the presence of a submanifold $\mathcal K\subset M^n$, we recall that Warner \cite{Warner} introduced a special class of Jacobi fields, the so-called $\mathcal K $-Jacobi fields. In our setting (see also~\cite{GalvezLozano}), this implies the following. Given an orthonormal basis $\{e_1,\ldots,e_{n-1}\}\subset T_pM^n$ orthogonal to $\dot\gamma(L)$, there exist Jacobi fields $J_i:[0,L]\to TM^n$ along $\gamma$, perpendicular to $\dot\gamma$, such that
\begin{equation}\label{Jacobi}
J_i(L)=e_i
\quad\text{and}\quad
-dN(J_i(0)) + J_i'(0)\ \text{is proportional to}\ \dot\gamma(0),
\end{equation}
where $dN$ denotes the differential of the Gauss map of $\Sigma^{n-1}$. These Jacobi fields reflect the geometry of $\Sigma^{n-1}$ and, in particular, satisfy
\[
\langle J_i'(0),J_i(0)\rangle=-\II_{\Sigma^{n-1}}(J_i(0),J_i(0)),
\]
where $\II_{\Sigma^{n-1}}$ is the second fundamental form of $\Sigma^{n-1}\subset M^n$ with respect to the normal vector $\dot\gamma(0)=N(\gamma(0))$.

Taking the trace at $p$ with respect to the orthonormal basis $\{\dot\gamma(L),e_1,\ldots,e_{n-1}\}$, we obtain
\begin{equation}\label{Laplacian}
\Delta v(p)=h''(L)+h'(L)\left(-\sum_{i=1}^{n-1}\II_\Sigma(J_i(0),J_i(0))
+\sum_{i=1}^{n-1}\mathcal{I}_L(J_i,J_i)\right).
\end{equation}

We now state a comparison result that provides a lower bound for the index form along geodesics in manifolds whose Ricci curvature is bounded above by a negative constant.

\begin{lemma}\label{LemComparison}
Let $M^n$ be a complete Riemannian manifold with
\[
\Ric_{M^{n}}\leq -(n-1)\kappa
\]
for some constant $\kappa>0$. Let $\gamma:[0,L]\to M^n$ be a unit-speed geodesic segment with no focal points to a hypersurface $\Sigma^{n-1}\subset M^n$, and let $J_1,\ldots,J_{n-1}$ be Jacobi fields along $\gamma$ satisfying:
\begin{itemize}
\item $\langle J_i(t),\dot\gamma(t)\rangle=0$ for all $t\in[0,L]$,
\item $J_i$ satisfies the boundary condition~\eqref{Jacobi}.
\end{itemize}
Then
\[
\sum_{i=1}^{n-1}\mathcal{I}_L(J_i,J_i)\ge (n-1)\sqrt{\kappa}\,\tanh(\sqrt{\kappa}L).
\]
\end{lemma}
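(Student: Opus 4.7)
The plan is a variational comparison based on an explicit test vector field modeled on the one-dimensional Jacobi equation in constant curvature.

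First, I would introduce a parallel orthonormal frame $\{E_1,\ldots,E_{n-1}\}$ along $\gamma$, perpendicular to $\dot\gamma$, with $E_i(L)=e_i$, together with the scalar function $\phi(t):=\cosh(\sqrt{\kappa}\,t)/\cosh(\sqrt{\kappa}\,L)$, which solves the model ODE $\phi''=\kappa\phi$ and satisfies $\phi(L)=1$, $\phi'(0)=0$. Set $V_i(t):=\phi(t)E_i(t)$; each $V_i$ is perpendicular to $\dot\gamma$, reaches $e_i$ at $t=L$, has $V_i(0)\in T_{\gamma(0)}\Sigma$, and satisfies $V_i'(0)=0$.

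Second, since $\sum_i\langle R(E_i,\dot\gamma)\dot\gamma,E_i\rangle=\Ric(\dot\gamma,\dot\gamma)$ in a parallel frame, the Ricci hypothesis together with integration by parts using $\phi''=\kappa\phi$ gives
\[
\sum_{i=1}^{n-1}\mathcal{I}_L(V_i,V_i) = (n-1)\int_0^L(\phi')^2\,dt - \int_0^L \phi^2\,\Ric(\dot\gamma,\dot\gamma)\,dt \ge (n-1)\bigl[\phi(L)\phi'(L)-\phi(0)\phi'(0)\bigr] = (n-1)\sqrt{\kappa}\,\tanh(\sqrt{\kappa}\,L).
\]

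Third, I would pass from the test fields $V_i$ to the $\Sigma$-Jacobi fields $J_i$. Writing $J_i=V_i+W_i$ with $W_i(L)=0$ and $W_i(0)\in T_{\gamma(0)}\Sigma$, the bilinear expansion of the index form yields
\[
\sum_{i}\mathcal{I}_L(J_i,J_i) = \sum_{i}\mathcal{I}_L(V_i,V_i) + \sum_{i}\mathcal{I}_L(W_i,W_i) + 2\sum_{i}\mathcal{I}_L(V_i,W_i).
\]
The term $\sum_i\mathcal{I}_L(W_i,W_i)$ is nonnegative because $W_i(L)=0$ and the Cartan--Hadamard setting (implicit from the surrounding sections) rules out conjugate points of $\gamma(L)$ on $[0,L]$. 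For the cross term, integration by parts combined with $V_i'(0)=0$, $W_i(L)=0$, and the identity $V_i''+RV_i=\phi(R+\kappa I)E_i$ coming from $\phi''=\kappa\phi$, recasts it as an integral of $-\phi\sum_i\langle (R+\kappa I)E_i,W_i\rangle$, whose sign is controlled by the trace inequality $\operatorname{tr}(R+\kappa I)=\Ric(\dot\gamma,\dot\gamma)+(n-1)\kappa\le 0$ and the $\Sigma$-Jacobi boundary condition~\eqref{Jacobi}.

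The main obstacle is the cross term: the Ricci hypothesis only bounds the trace of $R+\kappa I$, so the argument must exploit the full Jacobi structure of the $J_i$ and the no-focal-point hypothesis (which makes the matrix Riccati $U=A'A^{-1}$ well defined and symmetric) to reassemble the local inequality into a global one. This is the technical heart of the lemma and is modeled on the convex-barrier/Riccati comparison arguments of Choi and G\'alvez--Lozano cited in the introduction.
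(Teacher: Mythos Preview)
Your Step~2 is correct, and it is essentially the endgame computation the paper gets from Choi's Lemmas~4.3--4.4. The real gap is Step~3, and it is structural rather than technical. The fields $J_i$ are the $\Sigma$-Jacobi fields with $J_i(L)=e_i$, and by Warner's index lemma they \emph{minimize} the focal index form among all competitors $V$ with $V(L)=e_i$ and $V(0)\in T_{\gamma(0)}\Sigma$. Hence comparing $J_i$ to a test field $V_i$ inside $M$ can only produce an \emph{upper} bound on $\mathcal I_L(J_i,J_i)$ (modulo the $\II_\Sigma$ boundary terms), which is the wrong direction for the lemma. Your attempted repair of the cross term does not save this: writing $W_i=\sum_j w_{ij}E_j$, one has $\sum_i\langle(\kappa I+R)E_i,W_i\rangle=\operatorname{tr}\bigl((\kappa I+R)\,W\bigr)$ for the coefficient matrix $W=(w_{ij})$, and the Ricci hypothesis only gives $\operatorname{tr}(\kappa I+R)\le 0$, which says nothing about $\operatorname{tr}\bigl((\kappa I+R)\,W\bigr)$ unless $W$ is scalar. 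No Riccati bookkeeping on $M$ will turn a trace bound into a full operator bound.

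The paper avoids this by reversing the roles of ``Jacobi'' and ``test field''. It transplants the $J_i$ to $\mathbb H^n(-\kappa)$ via a parallel frame, obtaining vector fields $W_i$ with $|W_i|=|J_i|$ and $|W_i'|=|J_i'|$ pointwise; the Ricci bound then gives $\sum_i\mathcal I_L^{M}(J_i,J_i)\ge\sum_i\tilde{\mathcal I}_L(W_i,W_i)$ because the kinetic terms match and the curvature terms compare by trace. In the model space the $W_i$ are \emph{not} Jacobi, so Warner's minimality (now applied in $\mathbb H^n(-\kappa)$, with a totally geodesic $\Sigma$) yields $\tilde{\mathcal I}_L(W_i,W_i)\ge\tilde{\mathcal I}_L(\tilde J_i,\tilde J_i)$, and the right-hand side is exactly your $\phi$-computation carried out where it is legitimate. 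In short: keep your Step~2, but run it in $\mathbb H^n(-\kappa)$ \emph{after} transplanting the $J_i$ there, not in $M$ \emph{before} comparing to them.
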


\begin{proof}
Let $\{e_1(t),\ldots,e_{n-1}(t)\}\subset \mathfrak{X}(\gamma)$ be a parallel orthonormal frame along $\gamma$, with $e_i(L)=e_i\in T_{\gamma(L)}M^n$. Following~\cite{Choi}, write
\[
J_i(t)=\sum_{j=1}^{n-1} a^i_j(t)\,e_j(t),\qquad i=1,\ldots,n-1,
\]
for some smooth real functions $a^i_j(t)$.

Let $\tilde\gamma:[0,L]\to \mathbb{H}^n(-\kappa)$ be a unit-speed geodesic in the hyperbolic space of constant sectional curvature $-\kappa$, and let $\{\tilde e_1(t),\ldots,\tilde e_{n-1}(t)\}\subset \mathfrak{X}(\tilde\gamma)$ be a parallel orthonormal frame along $\tilde\gamma$. Define comparison vector fields
\[
W_i(t)=\sum_{j=1}^{n-1} a^i_j(t)\,\tilde e_j(t),\qquad i=1,\ldots,n-1.
\]
By construction, $|W_i(t)|=|J_i(t)|$ and $|W_i'(t)|=|J_i'(t)|$ for all $t\in[0,L]$. Using $\Ric_{M^{n}}(\dot\gamma(t))\le -(n-1)\kappa$ and the fact that $\{J_i(t)\}_{i=1}^{n-1}$ spans an $(n-1)$-dimensional subspace orthogonal to $\dot\gamma(t)$, we obtain
\[
\sum_{i=1}^{n-1}\langle R_{M^{n}}(J_i,\dot\gamma)\dot\gamma, J_i\rangle(t)
\le
\sum_{i=1}^{n-1}\langle R_{\mathbb{H}^n(-\kappa)}(W_i,\dot{\tilde\gamma})\dot{\tilde\gamma}, W_i\rangle(t),
\]
where $R_{M^{n}}$ and $R_{\mathbb{H}^n(-\kappa)}$ are the Riemann curvature tensors of $M^{n}$ and $\mathbb{H}^n(-\kappa)$, respectively. Integrating from $0$ to $L$ yields
\[
\sum_{i=1}^{n-1}\mathcal{I}_L(J_i,J_i)\ge \sum_{i=1}^{n-1}\tilde{\mathcal{I}}_L(W_i,W_i),
\]
where $\tilde{\mathcal{I}}$ denotes the index form in $\mathbb{H}^n(-\kappa)$.

Since each $W_i$ is orthogonal to $\dot{\tilde\gamma}$, we may apply~\cite[Theorem~2.1]{Warner} to get
\[
\tilde{\mathcal{I}}_L(W_i,W_i)\ge \tilde{\mathcal{I}}_L(\tilde J_i,\tilde J_i),
\]
where $\tilde J_i$ is the Jacobi field along $\tilde\gamma$ satisfying
\[
\tilde J_i(L)=W_i(L),\qquad \langle \tilde J_i(0),\dot{\tilde\gamma}(0)\rangle=0,
\]
together with the initial condition corresponding to~\eqref{Jacobi}. Finally, by~\cite[Lemmas~4.3 and~4.4]{Choi},
\[
\tilde{\mathcal{I}}_L(\tilde J_i,\tilde J_i)\ge \sqrt{\kappa}\,\tanh(\sqrt{\kappa}L)
\]
for each $i$. Summing over $i=1,\ldots,n-1$ gives the desired inequality.
\end{proof}

Fix a totally geodesic hyperplane $P\subset \mathbb{H}^n(-\kappa)$, and let $d_{\mathbb{H}^n(-\kappa)}:\mathbb{H}^n(-\kappa)\to\mathbb{R}$ denote the signed distance function to $P$. Suppose $u$ is a smooth function depending only on the distance to $P$, that is, $u(p)=h(d_{\mathbb{H}^n(-\kappa)}(p))$ for some smooth function $h:\mathbb{R}\to\mathbb{R}$. Then $u$ solves
\[
\Delta u + \lambda u = 0 \quad \text{in } \mathbb{H}^n(-\kappa)
\]
if and only if $h$ satisfies the ordinary differential equation
\begin{equation}\label{EDO_Hn}
h''(t) + (n-1)\sqrt{\kappa}\tanh(\sqrt{\kappa}t)\,h'(t) + \lambda h(t)=0.
\end{equation}

In~\cite[Lemma 3.1]{BonorinoKlaser}, Bonorino and Klaser showed that for any $\lambda\in\left[0,\frac{(n-1)^2\kappa}{4}\right]$, there exist constants $C>0$ and $t_0>0$ such that the corresponding solution $h$ is positive and decreasing on $[t_0,\infty)$, and satisfies
\[
h(t)\le Ce^{-\frac{(n-1)\sqrt{\kappa}}{2}t}\quad \text{for all } t\ge t_0.
\]

We now use this function to construct supersolutions on a general Riemannian manifold with Ricci curvature bounded above by a negative constant.

\begin{lemma}\label{EDO-v}
Let $M^n$ be a complete Riemannian manifold with $\Ric_{M^{n}}\le -(n-1)\kappa$, and let $\Sigma^{n-1}\subset M^n$ be a $C^k$ properly embedded hypersurface such that $\II_{\Sigma^{n-1}}\le 0$ with respect to the orientation $N$ pointing into the component $M^+$, where $M^{n}\setminus\Sigma^{n-1} = M^+\cup M^-$. Let $h$ be a solution to the ODE~\eqref{EDO_Hn} for some $\lambda\in\left[0,\frac{(n-1)^2\kappa}{4}\right]$, and define $v:=h\circ d_{\Sigma^{n-1}}$, where $d_{\Sigma^{n-1}}$ is the signed distance function to $\Sigma^{n-1}$. Then
\[
\Delta v(p) + \lambda v(p) \le 0
\quad \text{for all } p\in M^n\setminus \mathrm{Cut}(\Sigma) \text{ such that } d_{\Sigma^{n-1}}(p)\ge t_0,
\]
where $t_0$ is given in \cite[Lemma 3.1]{BonorinoKlaser}. In particular, $v\in C^k(M^+)$.
\end{lemma}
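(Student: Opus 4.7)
The plan is to evaluate the identity \eqref{Laplacian} at a regular point $p\in M^+$ with $L:=d_{\Sigma^{n-1}}(p)\ge t_0$, using the unit-speed minimizing geodesic $\gamma:[0,L]\to M^n$ from the footprint $p'\in\Sigma^{n-1}$ to $p$ together with the $\Sigma$-Jacobi fields $J_1,\ldots,J_{n-1}$ along $\gamma$ that are perpendicular to $\dot\gamma$, have $\{J_i(L)\}_{i=1}^{n-1}$ orthonormal, and satisfy the boundary condition \eqref{Jacobi}. This produces the identity
$$
\Delta v(p)=h''(L)+h'(L)\Bigl(-\sum_{i=1}^{n-1}\II_\Sigma(J_i(0),J_i(0))+\sum_{i=1}^{n-1}\mathcal I_L(J_i,J_i)\Bigr),
$$
and the goal is to show that each of the two summations inside the parenthesis can be handled so that the whole expression is bounded above by $-\lambda v(p)$.

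The key observation is that the hypotheses pin down the signs of the three factors. By \cite[Lemma~3.1]{BonorinoKlaser}, the solution $h$ is positive and strictly decreasing on $[t_0,\infty)$, so $h(L)>0$ and $h'(L)<0$. The convexity assumption $\II_{\Sigma^{n-1}}\le 0$ with respect to $N$ pointing into $M^+$ gives $-\II_\Sigma(J_i(0),J_i(0))\ge 0$, so multiplying by $h'(L)<0$ yields a nonpositive contribution which can simply be discarded in the upper bound. For the remaining term, Lemma~\ref{LemComparison} applies directly (the Ricci hypothesis and the structure of the $J_i$ are exactly those required) and delivers $\sum_i\mathcal I_L(J_i,J_i)\ge (n-1)\sqrt{\kappa}\tanh(\sqrt{\kappa}L)$; multiplying by the negative number $h'(L)$ reverses the inequality and gives
$$
\Delta v(p)\le h''(L)+h'(L)(n-1)\sqrt{\kappa}\tanh(\sqrt{\kappa}L).
$$

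The last step is purely algebraic: by the very definition of $h$ through the ODE \eqref{EDO_Hn}, the right-hand side equals $-\lambda h(L)=-\lambda v(p)$, which gives the inequality $\Delta v+\lambda v\le 0$ at every regular point with $d_{\Sigma^{n-1}}\ge t_0$. For the regularity assertion $v\in C^k(M^+)$, one appeals to the fact that $h$ is smooth and $d_{\Sigma^{n-1}}$ is $C^k$ on $M^n\setminus\mathrm{Cut}(\Sigma^{n-1})$, combined with the standard Riccati comparison on the $M^+$ side: under $\II_\Sigma\le 0$ there are no focal points to $\Sigma^{n-1}$ along outward normal geodesics, so the normal exponential map is a local diffeomorphism there. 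No single step is a real obstacle; the delicate point is purely bookkeeping, namely ensuring that the convexity of $\Sigma^{n-1}$ and the sign of $h'(L)$ cooperate, so that both the shape-operator term and the index-form comparison push in the same direction.
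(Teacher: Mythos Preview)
Your proof is correct and follows essentially the same approach as the paper: evaluate \eqref{Laplacian} at a regular point, use $h'(L)\le 0$ together with $\II_\Sigma\le 0$ and Lemma~\ref{LemComparison} to bound the parenthesized term, and then invoke the ODE \eqref{EDO_Hn}. Your explicit justification of the regularity assertion $v\in C^k(M^+)$ via the absence of focal points on the convex side is a welcome addition that the paper leaves implicit.
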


\begin{proof}
Let $p\in M^{n}\setminus \mathrm{Cut}(\Sigma^{n-1})$ with $d_{\Sigma^{n-1}}(p)=L\ge t_0$, and let $\gamma:[0,L]\to M^n$ be the unique unit-speed minimizing geodesic from $\Sigma^{n-1}$ to $p$, i.e., $\gamma(0)\in\Sigma^{n-1}$, $\gamma(L)=p$, and $\dot\gamma(0)=N(\gamma(0))$. Using~\eqref{Laplacian} together with Lemma~\ref{LemComparison}, and the fact that $h'(L)\le 0$, we obtain
\[
\begin{aligned}
\Delta v(p)
&= h''(L) + h'(L)\left(-\sum_{i=1}^{n-1}\II_{\Sigma^{n-1}}(J_i(0),J_i(0)) + \sum_{i=1}^{n-1}\mathcal{I}_L(J_i,J_i)\right)\\
&\le h''(L) + h'(L)\,(n-1)\sqrt{\kappa}\tanh(\sqrt{\kappa}L),
\end{aligned}
\]
where $J_i$ are the Jacobi fields along $\gamma$ satisfying the boundary condition~\eqref{Jacobi}. Since $\II_{\Sigma^{n-1}}\le 0$, the first term in parentheses is nonnegative.
Using \eqref{EDO_Hn}, we conclude that
\[
\Delta v(p)\le -\lambda h(L) = -\lambda v(p),
\]
which completes the proof.
\end{proof}

\section{Convex barriers in pinched Cartan--Hadamard manifolds}\label{Barriers}

In this section we show that Cartan--Hadamard manifolds with pinched negative
sectional curvature admit the convex hypersurfaces (barriers) needed in the proof
of our main theorem.

Throughout, $(M^n,g)$ denotes a Cartan--Hadamard manifold. When needed, we will impose
additional curvature assumptions, which will be stated explicitly.

\subsection{The model case}
When $M^{n}=\mathbb{H}^n(-\kappa)$, a convenient class of barriers is given by equidistant
hypersurfaces to totally geodesic hyperplanes. Given $o\in \mathbb{H}^n(-\kappa)$,
$z\in S_o\mathbb{H}^n(-\kappa)$ and $\theta\in(0,\pi/2]$, let $\mathcal{I}_o(z,\theta)$
be the totally geodesic hyperplane asymptotic to $\partial C_o(z,\theta)$. Then
\begin{equation}\label{eq:ttheta}
t_{\kappa}(\theta):=d\bigl(o,\mathcal{I}_o(z,\theta)\bigr)
=\frac{1}{\sqrt{\kappa}}\ln\!\left(\frac{1+\cos\theta}{\sin\theta}\right),
\qquad \forall\,\theta\in(0,\pi/2],
\end{equation}
see \cite[\S 3.9.6]{Klingenberg}. 
Fix an arbitrary $c>0$, and let $\Sigma^{n-1}$ be the
equidistant hypersurface to $\mathcal{I}_o(z,\theta)$ such that
$d(\mathcal{I}_o(z,\theta),\Sigma)=c$ and whose mean curvature vector points to the
region not containing $o$. Denote by $\mathcal{C}$ the convex set bounded by $\Sigma^{n-1}$.
For $\theta$ small enough, $\mathcal{C}$ is a convex barrier for the cone $C_o(z,\theta)$;
see Figure~\ref{fig:Picture1}.
\begin{figure}[h!]
  \centering
  \includegraphics[width=0.5\linewidth]{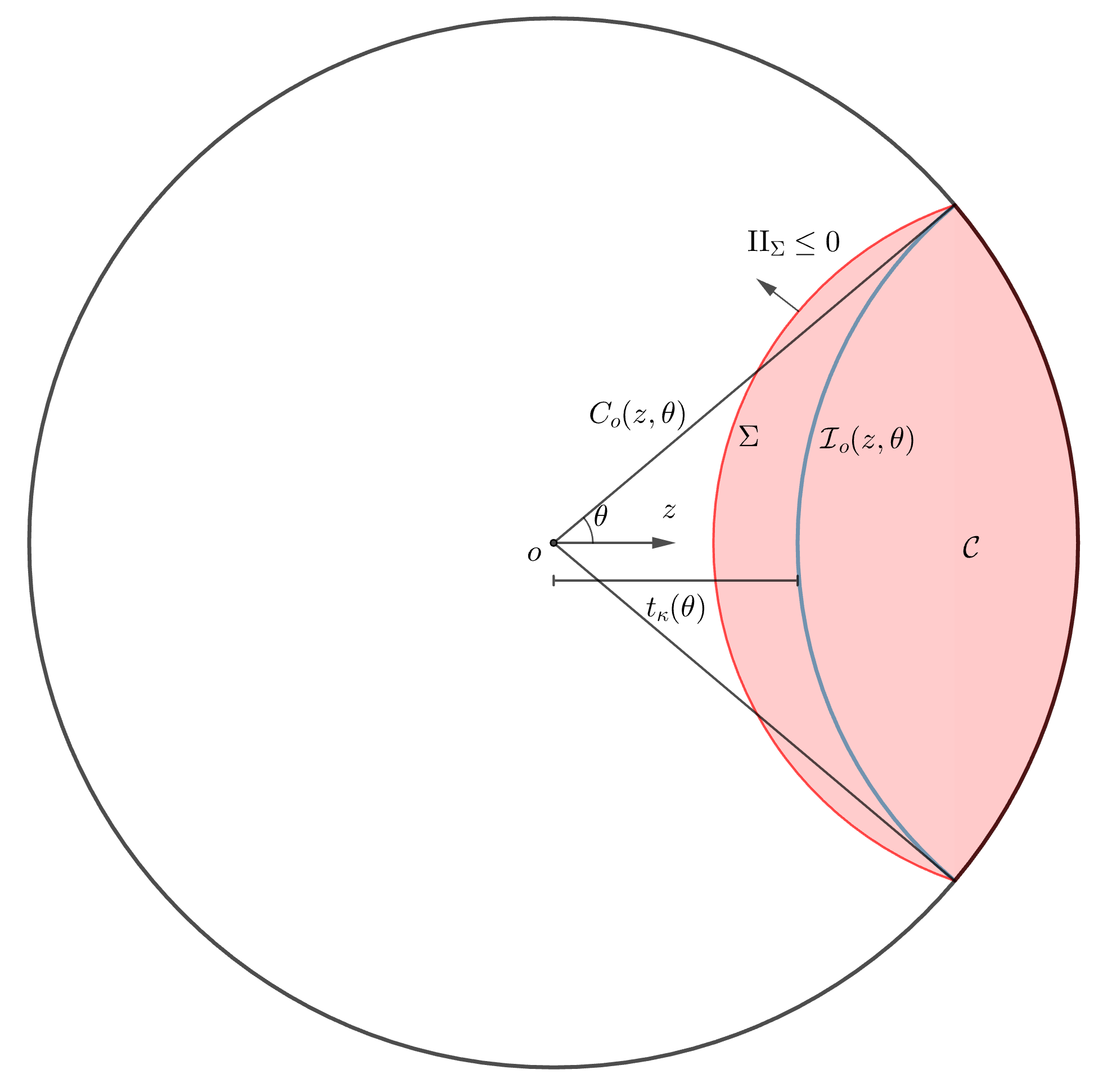}
  \caption{A picture in the Poincar\'e ball model of $\mathbb{H}^n(-\kappa)$.
  For $\theta$ small enough, the convex set $\mathcal{C}$ bounded by an equidistant
  hypersurface $\Sigma$ provides a barrier for the cone $C_o(z,\theta)$.}
  \label{fig:Picture1}
\end{figure}

\subsection{Cones and half-spaces}
Recall that, given $o\in M^{n}$, $z\in S_oM^{n}$ and $\theta\in(0,\pi)$, the geodesic cone
based at $o$ with axis $z$ and aperture $2\theta$ is
\[
C_o(z,\theta):=\Bigl\{\gamma_{o,w}(t)\in M^{n}:\ t>0\ \text{and}\ \measuredangle_o(z,w)<\theta,\ w\in S_oM^{n}\Bigr\}.
\]
Cones with aperture $\pi$ will play a central role; we refer to them as \emph{half-spaces}
and write
\[
H_o(z):=C_o(z,\pi/2).
\]
Fix $z\in S_oM^{n}$ and consider the associated family of half-spaces
\[
H_t(z):=H_{\gamma_{o,z}(t)}\bigl(\gamma'_{o,z}(t)\bigr),\qquad t\in\mathbb{R}.
\]
These half-spaces exhaust $M^{n}$ and satisfy the monotonicity property
\[
t_1<t_2 \ \Longrightarrow\ H_{t_2}(z)\subset H_{t_1}(z).
\]

In the model case $M^{n}=\mathbb{H}^n(-\kappa)$, one has
\[
\partial_\infty H_{t_\kappa(\theta)}(z)=\partial_\infty C_o(z,\theta)
\quad\text{and}\quad
\partial H_{t_\kappa(\theta)}(z)=\mathcal{I}_o(z,\theta).
\]
We now use comparison arguments to obtain a one-sided inclusion in general
Cartan--Hadamard manifolds.

\begin{lemma}\label{lemmaAngle}
Let $M^n$ be Cartan--Hadamard with sectional curvature $-\kappa\le K\le 0$.
Then, for every $o\in M^n$, every $z\in S_oM^n$ and every $\theta\in(0,\pi/2)$,
\[
\partial_\infty C_o(z,\theta)\subset \partial_\infty H_{t_\kappa(\theta)}(z),
\]
where $t_\kappa(\theta)$ is given by \eqref{eq:ttheta}.
\end{lemma}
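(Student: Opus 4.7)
The plan is to show that, for every $\xi\in\partial_\infty C_o(z,\theta)$, the representing geodesic ray $\gamma_{o,w}$ eventually enters $H_{t_\kappa(\theta)}(z)$, so that $\xi\in\overline{H_{t_\kappa(\theta)}(z)}\cap\partial_\infty M^n=\partial_\infty H_{t_\kappa(\theta)}(z)$. First I would write $\xi=\gamma_{o,w}(\infty)$ with $\alpha:=\measuredangle_o(z,w)\le\theta$ and reduce to the strict case $\alpha<\theta$: since $H_{t_\kappa(\theta)}(z)$ is open in $M^n$, the set $\partial_\infty H_{t_\kappa(\theta)}(z)$ is closed in the cone topology, and by continuity of the horizon map $\mathrm{Hor}_o$ the boundary case $\alpha=\theta$ is approximated by directions $w_k\to w$ with $\measuredangle_o(z,w_k)<\theta$.

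Setting $p_0:=\gamma_{o,z}(t_\kappa(\theta))$, $v_0:=\gamma_{o,z}'(t_\kappa(\theta))$, and $q_t:=\gamma_{o,w}(t)$, the condition $q_t\in H_{t_\kappa(\theta)}(z)$ reads $\measuredangle_{p_0}(v_0,\gamma_{p_0,q_t}'(0))<\pi/2$, which by supplementarity is equivalent to the interior angle $\beta_t$ at $p_0$ of the geodesic triangle $(o,p_0,q_t)$ satisfying $\beta_t>\pi/2$. The core of the proof is a Toponogov-type comparison that uses only the lower bound $K\ge-\kappa$. Three ingredients combine to give $\beta_t\ge\tilde\beta^{\mathrm{hinge}}_t$, the angle at $\tilde p_0$ of the $\mathbb{H}^2(-\kappa)$ hinge with legs $t_\kappa(\theta),t$ and opening angle $\alpha$: (i) the triangle angle comparison $\beta_t\ge\tilde\beta^{*}_t$ for the comparison triangle with the same three side lengths $a_t:=d(p_0,q_t),\,t,\,t_\kappa(\theta)$; (ii) the hinge length comparison $a_t\le\tilde a_t$; and (iii) the monotonicity in $\mathbb{H}^2(-\kappa)$ that, for fixed legs $b>c$, the angle opposite to $b$ strictly decreases as the third side $a$ increases. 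The monotonicity (iii) follows by differentiating the hyperbolic law of cosines: the derivative $(d/da)\cos\tilde\beta$ has the sign of $\cosh a\cosh b-\cosh c$, which is positive iff $\tanh b>\tanh c$, i.e., iff $t>t_\kappa(\theta)$.

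It remains to show $\tilde\beta^{\mathrm{hinge}}_t>\pi/2$ for $t$ large. The hyperbolic law of cosines together with the identities $\sinh(\sqrt\kappa\, t_\kappa(\theta))=\cot\theta$ and $\cosh(\sqrt\kappa\, t_\kappa(\theta))=\csc\theta$ (immediate from \eqref{eq:ttheta}) reduce the required inequality to $\tanh(\sqrt\kappa\, t)\cos\alpha>\cos\theta$, which holds for $t$ sufficiently large because $\alpha<\theta$ gives $\cos\alpha>\cos\theta>0$ and $\tanh(\sqrt\kappa\, t)\to 1$ as $t\to\infty$. Hence $q_t\in H_{t_\kappa(\theta)}(z)$ for $t\gg 1$ and $\xi\in\partial_\infty H_{t_\kappa(\theta)}(z)$. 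The main technical obstacle is the careful bookkeeping that combines the two separate Toponogov inequalities with the hyperbolic monotonicity into the single bound $\beta_t\ge\tilde\beta^{\mathrm{hinge}}_t$; the final trigonometric computation and the reduction from $\alpha=\theta$ to $\alpha<\theta$ are routine.
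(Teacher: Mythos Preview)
Your argument is correct and takes a genuinely different route from the paper's proof.

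The paper works ``from the half-space outwards'': it fixes a unit vector $w\in\langle v_0\rangle^\perp$ at $p_0=\gamma_{o,z}(t_\kappa(\theta))$, follows the ray $c(s)=\exp_{p_0}(sw)\subset\partial H_{t_\kappa(\theta)}(z)$, and applies the SSS Toponogov angle comparison (for $K\ge-\kappa$) to the triangles $\triangle(o,p_0,c(s))$. The hyperbolic law of sines forces the comparison angle $\tilde\beta_s\to 0$, and the second law of cosines together with $\cosh(\sqrt\kappa\,t_\kappa(\theta))=\csc\theta$ then yields that the asymptotic direction of $c(s)$ makes angle $\ge\theta$ with $z$; a topological separation argument on $\partial_\infty M$ finishes the inclusion.

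You instead work ``from the cone inwards'': for each ray $\gamma_{o,w}$ with $\alpha=\measuredangle_o(z,w)<\theta$ you show directly that $\gamma_{o,w}(t)\in H_{t_\kappa(\theta)}(z)$ for $t\gg 1$, which in the hyperbolic model becomes exactly the inequality $\tanh(\sqrt\kappa\,t)\cos\alpha>\cos\theta$. The cost is that you must combine \emph{two} Toponogov comparisons (SSS angle and SAS hinge) via the monotonicity of $\tilde\beta$ in the opposite side $a$; the payoff is that no separation argument at infinity is needed beyond the routine closure reduction from $\alpha=\theta$ to $\alpha<\theta$. One small slip: your ``iff'' in the monotonicity step is not quite right, since the sign of $\cosh a\cosh b-\cosh c$ depends on $a$ as well; but $b>c$ (i.e.\ $t>t_\kappa(\theta)$) is clearly \emph{sufficient} because $\cosh a\ge 1$, and that is all you use.
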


\begin{proof}
Let $p:=\gamma_{o,z}(t_\kappa(\theta))$. Choose a unit vector
$w\in\langle\gamma'_{o,z}(t_\kappa(\theta))\rangle^\perp$ and define
$c(s):=\exp_p(sw)$ for $s\in\mathbb{R}_+$. Each point $c(s)$ lies on
$\partial H_{t_\kappa(\theta)}(z)$. Consider the geodesic triangle
$\triangle_s:=\triangle(o,p,c(s))\subset M^n$, and denote by $\theta_s,\alpha_s,\beta_s$
the angles at $o,p,c(s)$, respectively; see Figure~\ref{fig:Picture3}.
By construction, $pc(s)\perp po$, hence $\alpha_s=\pi/2$.

\begin{figure}[h!]
  \centering
  \includegraphics[width=0.5\linewidth]{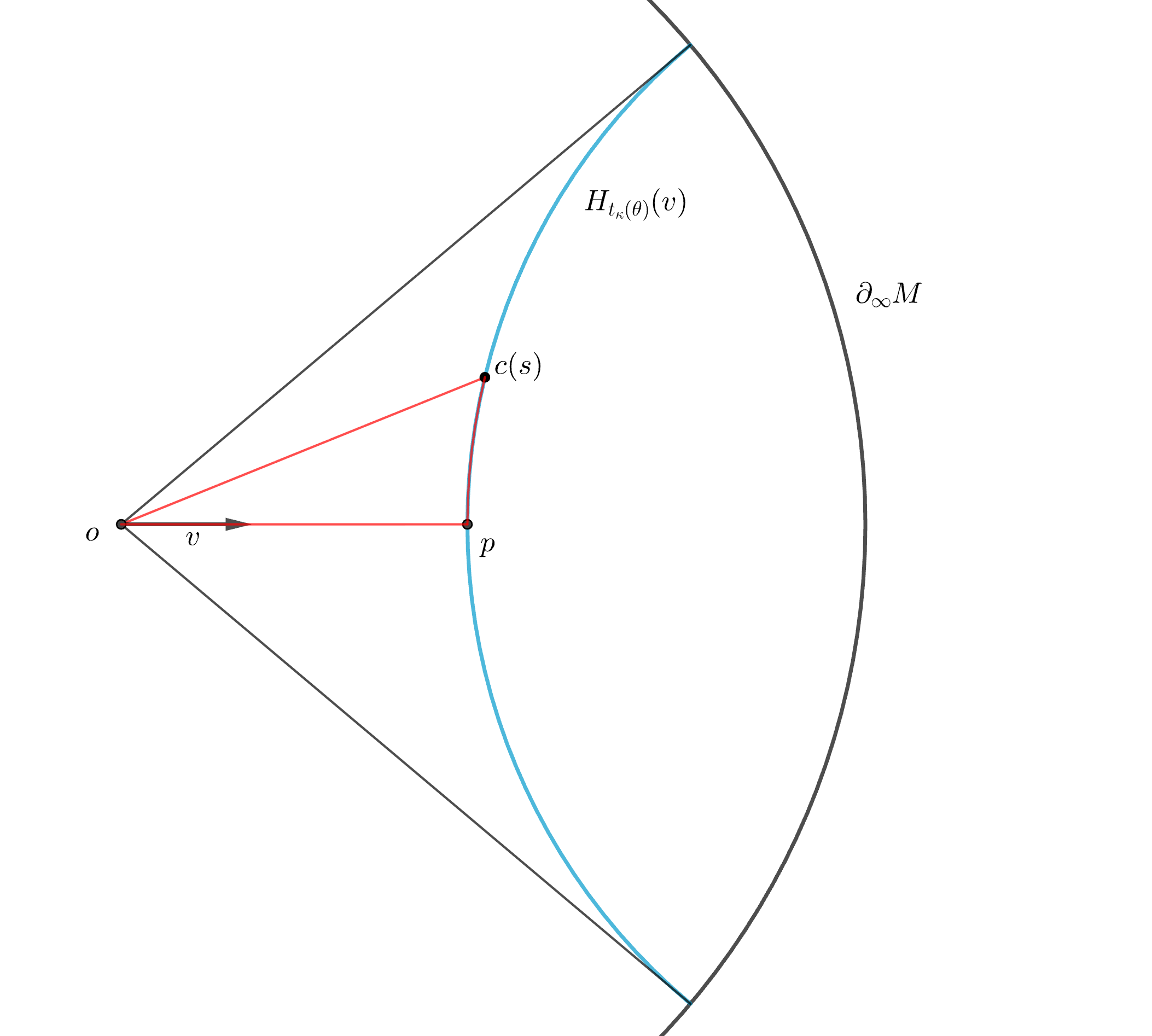}
  \caption{The triangle $\triangle(o,p,c(s))$ used in the proof of Lemma~\ref{lemmaAngle}.}
  \label{fig:Picture3}
\end{figure}

Now consider the \emph{Alexandrov triangle}
\( \tilde{\triangle}_s := \triangle ( \tilde{o} , \tilde{p} , \tilde{c}(s))
\subset \mathbb{H}^2(-\kappa) \), i.e., a triangle in the hyperbolic plane of
constant curvature \( -\kappa \) with corresponding side lengths:
\begin{itemize}
\item \( d(\tilde{o}, \tilde{p}) = d(o, p) = t_{\kappa}(\theta) \),
\item \( d(\tilde{p}, \tilde{c}(s)) = d(p, c(s)) = s \),
\item \( d(\tilde{o}, \tilde{c}(s)) = d(o, c(s)) \).
\end{itemize}

Let \( \tilde{\theta}_s \), \( \tilde{\alpha}_s \), and \( \tilde{\beta}_s \)
be the angles at \( \tilde{o} \), \( \tilde{p} \), and \( \tilde{c}(s) \),
respectively. By the Toponogov Comparison Theorem
(see \cite[Proposition 2.7.7]{Klingenberg}), we have
\[
\tilde\theta_s\le \theta_s\le \frac{\pi}{2},\qquad \tilde\alpha_s\le \alpha_s=\frac{\pi}{2},\qquad
\tilde\beta_s\le \frac{\pi}{2}.
\]

Using the hyperbolic law of sines, we obtain
\[
\frac{\sin \tilde{\theta}_s}{\sinh(\sqrt{\kappa}\, s)}
=
\frac{\sin \tilde{\beta}_s}{\sinh(\sqrt{\kappa}\, t_{\kappa}(\theta))}.
\]
As \( s \to \infty \), \( \sinh(\sqrt{\kappa}\, s) \to \infty \), which implies
\( \sin \tilde{\beta}_s \to 0 \). Since \( \tilde{\beta}_s \in [0,\pi/2] \), it
follows that \( \tilde{\beta}_s \to 0 \).

On the other hand, by the second hyperbolic law of cosines applied to the
angle \( \tilde{\beta}_s \) (whose opposite side is \( \tilde{o}\tilde{p} \)),
we have
\[
\cos \tilde{\beta}_s
=
-\cos \tilde{\theta}_s \cos \tilde{\alpha}_s
+
\sin \tilde{\theta}_s \sin \tilde{\alpha}_s
\cosh(\sqrt{\kappa}\, t_{\kappa}(\theta)).
\]
Since $\tilde{\alpha}_s,\tilde{\theta}_s \le \pi/2$, we have $\cos \tilde{\theta}_s \cos \tilde{\alpha}_s\ge 0 $ and $ \sin \tilde{\alpha}_s \le 1$. 
Thus
\begin{align*}
\cos \tilde{\beta}_s
&\le
\sin \tilde{\theta}_s
\cosh(\sqrt{\kappa}\, t_{\kappa}(\theta)) \\
&\le
\sin \theta_s
\cosh(\sqrt{\kappa}\, t_{\kappa}(\theta)).
\end{align*}

Taking the limit as \( s \to \infty \), we use \( \tilde{\beta}_s \to 0 \)
(which means \( \cos \tilde{\beta}_s \to 1 \)) and let
\( \theta_s \to \bar{\theta} \). Here, \( \bar{\theta} \) is the angle at \( o \)
between the direction \( z \) and the geodesic ray from \( o \) to
\( c(\infty) \in \partial_\infty H_{t_{\kappa}(\theta)}(z) \). This yields
\[
1 \leq \sin \bar{\theta}\, \cosh(\sqrt{\kappa}\, t_{\kappa}(\theta)).
\]
Recalling that \( \sqrt{\kappa}\, t_{\kappa}(\theta)
= \operatorname{arctanh}(\cos \theta) \), we have
\[
\cosh(\sqrt{\kappa}\, t_{\kappa}(\theta))
=
\frac{1}{\sqrt{1-\cos^2\theta}}
=
\frac{1}{\sin\theta}.
\]
Substituting into the inequality above, we obtain
\[
\sin \theta \leq \sin \bar{\theta}.
\]
Since \( \theta, \bar{\theta} \in [0, \pi/2] \), this implies
\( \theta \leq \bar{\theta} \).

Thus any asymptotic direction in
\( \partial_\infty H_{t_{\kappa}(\theta)}(z) \) makes angle at least \( \theta \)
with \( z \). By definition of \( C_o(z,\theta) \), it follows that
\[
\partial_\infty C_o(z, \theta)
\subset
\partial_\infty H_{t_{\kappa}(\theta)}(z).
\qedhere
\]
\end{proof}

\subsection{Anderson's convex sets and barriers}
The next result is a consequence of Anderson's main theorem \cite{Anderson83};
we keep a proof in Appendix \ref{appendixA} for completeness.

\begin{theorem}\label{thm:Anderson}
Let $M^n$ be Cartan--Hadamard with sectional curvature pinched between
$-\kappa_1\le K\le -\kappa_2<0$. Given $p\in M^n$ and $w\in S_pM^n$, there exists a
closed convex set $\mathcal{C}\subset M^n$ with smooth boundary such that:
\begin{itemize}
\item[(i)] $H_p(-w)\subset \mathcal{C}$;
\item[(ii)] There exists $r_0>0$, depending only on $\kappa_1$ and $\kappa_2$, such that
\[
\mathcal{C}\cap \mathcal  T_p(w,\pi/2,r_0+1)=\emptyset;
\]
\item[(iii)] If $t\ge r_0$, then
\[
d(\gamma_{p,w}(t),\mathcal{C})=t-r_0,
\]
where $\gamma_{p,w}(t):=\exp_p(tw)$.
\end{itemize}
\end{theorem}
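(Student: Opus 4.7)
My plan is to adapt Anderson's construction of convex subsets with prescribed asymptotic boundary in pinched negatively curved Cartan--Hadamard manifolds~\cite{Anderson83}, as refined by Borb\'ely~\cite{Borbely} and Ripoll--Telichevesky~\cite{RipollTelichevesky}, followed by the Greene--Wu smoothing~\cite{GW} already invoked in the introduction.

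As a first step, I would construct an unsmoothed convex prototype $\mathcal{C}_0$ satisfying: $H_p(-w)\subset \mathcal{C}_0$; the asymptotic boundary $\partial_\infty \mathcal{C}_0$ avoids a neighborhood of $\gamma_{p,w}(\infty)$ in $\partial_\infty M^n$; and $\gamma_{p,w}(r_0)\in \partial \mathcal{C}_0$ with outward unit normal $\gamma'_{p,w}(r_0)$ at that point, for a constant $r_0=r_0(\kappa_1,\kappa_2)$. The scooping argument rests on two quantitative ingredients under the pinching $-\kappa_1\le K\le -\kappa_2$: horospheres are uniformly strictly convex with principal curvatures in $[\sqrt{\kappa_2},\sqrt{\kappa_1}]$, and the distance function to any convex set is itself convex. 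Combining Lemma~\ref{lemmaAngle} (one-sided control of cones via half-spaces) with an intersection of horoballs based at points in $\partial_\infty H_p(-w)$ and a convex tube around $\gamma_{p,w}$, one obtains a closed convex set with the desired asymptotic boundary. The explicit value of $r_0$ is dictated by the model distance from $p$ to the corresponding equidistant hypersurface in $\mathbb{H}^n(-\kappa_2)$.

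Next, I would apply the Greene--Wu smoothing to $\partial \mathcal{C}_0$ to obtain a $C^\infty$ strictly convex hypersurface $\Sigma=\partial \mathcal{C}$ with $\mathcal{C}$ uniformly close to $\mathcal{C}_0$ in the Hausdorff sense. Building a small margin into $\mathcal{C}_0$ ensures that the smoothed $\mathcal{C}$ still contains $H_p(-w)$ and avoids the truncated cone $\mathcal{T}_p(w,\pi/2,r_0+1)$, which gives (i) and (ii). For (iii), I would use that by construction (possibly after a small adjustment of $r_0$ absorbing the smoothing error) the geodesic $\gamma_{p,w}$ meets $\partial \mathcal{C}$ orthogonally at $\gamma_{p,w}(r_0)$; convexity of $\mathcal{C}$ then forces the unique nearest point on $\mathcal{C}$ from $\gamma_{p,w}(t)$ to be $\gamma_{p,w}(r_0)$ for all $t\ge r_0$, and hence $d(\gamma_{p,w}(t),\mathcal{C})=t-r_0$.

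The main obstacle is the quantitative bookkeeping: producing a single universal constant $r_0=r_0(\kappa_1,\kappa_2)$ that simultaneously governs the containment of $H_p(-w)$, the one-unit separation from the truncated cone, and the exact unit-speed linear distance formula along $\gamma_{p,w}$. This is where the two-sided pinching is essential: the upper curvature bound $K\le -\kappa_2$ is used to make horospheres and geodesic tubes strictly convex (so that the scooping works), while the lower bound $K\ge -\kappa_1$ enters through Rauch comparison from below to prevent the convex barrier from collapsing too tightly onto $\gamma_{p,w}$.
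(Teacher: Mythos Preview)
Your high-level plan---an Anderson-type convex set plus Greene--Wu smoothing---matches the paper's approach, and you cite exactly the right sources. However, the specific construction you sketch is not the one in~\cite{Anderson83,RipollTelichevesky}, and this is where the proposal has a genuine gap.

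Anderson's convex set is \emph{not} built from intersections of horoballs or from convex tubes around $\gamma_{p,w}$. It is obtained by an iterative \emph{scooping} procedure starting from the geodesic ball $B_p(r_0)$: at each step $k$ one intersects sublevel sets of the form $\{\rho_p-\varepsilon f_q\le r_{k-1}\}$ over all seam points $q\in T_{k-1}\subset S_p(r_{k-1})$, where $f_q=\phi(\rho_q)$ is a smooth cutoff of the distance to $q$. The key estimate is the strict convexity of each such sublevel set for $\varepsilon$ small depending only on $\kappa_1,\kappa_2$ (Hessian comparison from both sides), together with a \emph{visual-angle} bound $\theta_k\le Ce^{-\sqrt{\kappa_2}\,r_k}$ which forces the total angular spread $\alpha_\infty=\tfrac{\pi}{4}+\sum_k\theta_k$ to be $<\tfrac{\pi}{2}$ once $r_0$ is large enough. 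This angular control is precisely what yields (i) and (ii); your horoball sketch contains no mechanism of this kind, and it is not clear how an intersection of horoballs based at $\partial_\infty H_p(-w)$ would simultaneously contain $H_p(-w)$ and stay out of the truncated cone with a \emph{uniform} $r_0$.

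Your argument for~(iii) via orthogonal intersection and convexity is clean in principle, but it hinges on $\gamma_{p,w}$ meeting $\partial\mathcal{C}$ orthogonally at $\gamma_{p,w}(r_0)$ \emph{after} smoothing, and you concede this may require adjusting $r_0$; that adjustment would then depend on $p$ and $w$, not just on $\kappa_1,\kappa_2$. The paper instead derives~(iii) from a dichotomy: every $y\in\mathcal{C}$ satisfies either $\rho_p(y)\le r_0$ or $\measuredangle_p(w,\gamma'_{p,y}(0))\ge \tfrac{\pi}{2}$, and in either case $d(\gamma_{p,w}(t),y)\ge t-r_0$ by the triangle inequality or Toponogov. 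This argument is robust under small Hausdorff perturbations and does not need exact orthogonality. If you want to keep your orthogonality route, you would need to explain why the smoothing can be arranged so that the contact point and normal along $\gamma_{p,w}$ are preserved exactly for every $p,w$.
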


We now extract from Theorem~\ref{thm:Anderson} the precise barrier statement used later.

\begin{theorem}\label{thm:barriers-pinched}
Let $M^n$ be Cartan--Hadamard with $-\kappa_1\le K\le -\kappa_2<0$.
Then there exist constants $\theta_0\in(0,\pi/2)$ and $C_2>0$, depending only on
$\kappa_1,\kappa_2$, such that for every $o\in M^n$, every $z\in S_oM^n$ and every
$\theta\in(0,\theta_0)$ there exists a closed convex set $\mathcal{C}\subset M^n$
with smooth boundary satisfying:
\begin{itemize}
\item[(i)] $o\notin \mathcal{C}$;
\item[(ii)] $\partial_\infty C_o(z,\theta)\subset \partial_\infty \mathcal{C}$;
\item[(iii)] $d(o,\mathcal{C})\ge \frac{1}{\sqrt{\kappa_1}}\ln\!\Big(\frac{1}{\theta}\Big)-C_2$.
\end{itemize}
\end{theorem}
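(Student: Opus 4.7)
The plan is to combine the asymptotic inclusion from Lemma~\ref{lemmaAngle} with Anderson's convex sets (Theorem~\ref{thm:Anderson}), placing the base point of the Anderson construction sufficiently far along the axis of the given cone. First I would invoke Lemma~\ref{lemmaAngle} with $\kappa=\kappa_1$, which is legitimate since $-\kappa_1\le K\le 0$, to obtain
\[
\partial_\infty C_o(z,\theta)\subset \partial_\infty H_{t_{\kappa_1}(\theta)}(z),
\qquad t_{\kappa_1}(\theta)=\frac{1}{\sqrt{\kappa_1}}\ln\frac{1+\cos\theta}{\sin\theta}.
\]
This reduces the problem to producing a convex barrier whose asymptotic boundary contains $\partial_\infty H_{t_{\kappa_1}(\theta)}(z)$. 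I would then set $p:=\gamma_{o,z}(t_{\kappa_1}(\theta))$ and $w:=\gamma'_{o,z}(t_{\kappa_1}(\theta))$, so that $H_{t_{\kappa_1}(\theta)}(z)=H_p(w)$, and apply Theorem~\ref{thm:Anderson} at $p$ with the \emph{reversed} direction $-w$. This yields a closed convex set $\mathcal{C}$ with smooth boundary satisfying $H_p(w)=H_p(-(-w))\subset\mathcal{C}$ and the distance formula $d(\gamma_{p,-w}(s),\mathcal{C})=s-r_0$ for all $s\ge r_0$, where $r_0>0$ depends only on $\kappa_1,\kappa_2$.

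Having constructed $\mathcal{C}$, the three conclusions would follow by direct verification. Because the geodesic $\gamma_{p,-w}$ retraces $\gamma_{o,z}$ in reverse, one has $\gamma_{p,-w}(t_{\kappa_1}(\theta))=o$; fixing $\theta_0\in(0,\pi/2)$ small enough so that $t_{\kappa_1}(\theta_0)>r_0$, Anderson's distance formula evaluated at $s=t_{\kappa_1}(\theta)$ gives, for any $\theta\in(0,\theta_0)$,
\[
d(o,\mathcal{C})=t_{\kappa_1}(\theta)-r_0>0,
\]
which proves (i). Property (ii) follows from monotonicity of the asymptotic boundary under inclusion: $\partial_\infty C_o(z,\theta)\subset\partial_\infty H_p(w)\subset\partial_\infty\mathcal{C}$. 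For (iii) I would use the elementary bound $(1+\cos\theta)/\sin\theta=\cot(\theta/2)\ge 1/\theta$ on $(0,\pi/2)$, which yields $d(o,\mathcal{C})\ge \frac{1}{\sqrt{\kappa_1}}\ln(1/\theta)-r_0$, so that $C_2:=r_0$ does the job and depends only on $\kappa_1,\kappa_2$ through Anderson's constant.

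The only delicate point is the sign choice when invoking Theorem~\ref{thm:Anderson}: we want $\mathcal{C}$ to sit on the far side of $p$ (away from $o$), so the ``forward'' half-space $H_p(w)$ must be contained in $\mathcal{C}$, which forces us to feed Anderson the reversed direction $-w$ rather than $w$; otherwise $\mathcal{C}$ would lie on the wrong side of $p$ and would fail both (i) and (ii). Once this bookkeeping is handled, the proof is essentially a packaging of Lemma~\ref{lemmaAngle}, Theorem~\ref{thm:Anderson}, and the trigonometric estimate above, with no additional curvature comparisons required beyond those already established in Section~\ref{Barriers} and in the Anderson construction.
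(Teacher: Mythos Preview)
Your proposal is correct and follows essentially the same route as the paper: place $p=\gamma_{o,z}(t_{\kappa_1}(\theta))$, apply Theorem~\ref{thm:Anderson} at $p$ with the reversed direction, combine item~(i) of Anderson with Lemma~\ref{lemmaAngle} (at $\kappa=\kappa_1$) for the asymptotic inclusion, and read off the distance to $o$ from item~(iii). The only cosmetic differences are that the paper derives $o\notin\mathcal C$ from Anderson's item~(ii) (hence imposes $t_{\kappa_1}(\theta)>r_0+1$ rather than your $t_{\kappa_1}(\theta)>r_0$), and bounds $\frac{1+\cos\theta}{\sin\theta}\ge\frac{1}{\sin\theta}\ge\frac{1}{\theta}$ instead of your $\cot(\theta/2)\ge 1/\theta$; both lead to $C_2=r_0$.
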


\begin{proof}
Let $r_0>0$ be given by Theorem~\ref{thm:Anderson}. Choose $\theta_0\in(0,\pi/2)$ so small that
$t_{\kappa_1}(\theta)>r_0+1$ for all $\theta\in(0,\theta_0)$, where $t_{\kappa_1}(\theta)$ is
defined by \eqref{eq:ttheta}.

Fix $o\in M^n$, $z\in S_oM^n$ and $\theta\in(0,\theta_0)$, and set $p:=\gamma_{o,z}(t_{\kappa_1}(\theta))$.
Apply Theorem~\ref{thm:Anderson} at $p$ with direction $w=-\gamma'_{o,z}(t_{\kappa_1}(\theta))$ to obtain
a closed convex set $\mathcal{C}$ with smooth boundary.

\smallskip
\noindent\emph{Assertion (i).}
Since $t_{\kappa_1}(\theta)>r_0+1$, we have
$o\in \mathcal  T_p(-\gamma'_{o,z}(t_{\kappa_1}(\theta)),\pi/2,r_0+1)$, hence $o\notin \mathcal{C}$
by item (ii) of Theorem~\ref{thm:Anderson}.

\smallskip
\noindent\emph{Assertion (ii).}
By Lemma~\ref{lemmaAngle} (applied with $\kappa=\kappa_1$, since $K\ge -\kappa_1$),
\[
\partial_\infty C_o(z,\theta)\subset \partial_\infty H_{t_{\kappa_1}(\theta)}(z).
\]
But $H_{t_{\kappa_1}(\theta)}(z)=H_p(\gamma'_{o,z}(t_{\kappa_1}(\theta)))$, hence
$H_{t_{\kappa_1}(\theta)}(z)\subset \mathcal{C}$ by item (i) of Theorem~\ref{thm:Anderson}.
Therefore $\partial_\infty C_o(z,\theta)\subset \partial_\infty \mathcal{C}$.

\smallskip
\noindent\emph{Assertion (iii).}
By item (iii) of Theorem~\ref{thm:Anderson},
\[
d(o,\mathcal{C}) \ge t_{\kappa_1}(\theta)-r_0
= \frac{1}{\sqrt{\kappa_1}}\ln\!\Big(\frac{1+\cos\theta}{\sin\theta}\Big)-r_0
\ge \frac{1}{\sqrt{\kappa_1}}\ln\!\Big(\frac{1}{\sin\theta}\Big)-r_0.
\]
Since $\theta\in(0,\pi/2)$, we have $\sin\theta\le \theta$, hence $\ln(1/\sin\theta)\ge \ln(1/\theta)$.
Taking $C_2:=r_0$ yields (iii).

\begin{figure}[h!]
  \centering
  \includegraphics[width=0.5\linewidth]{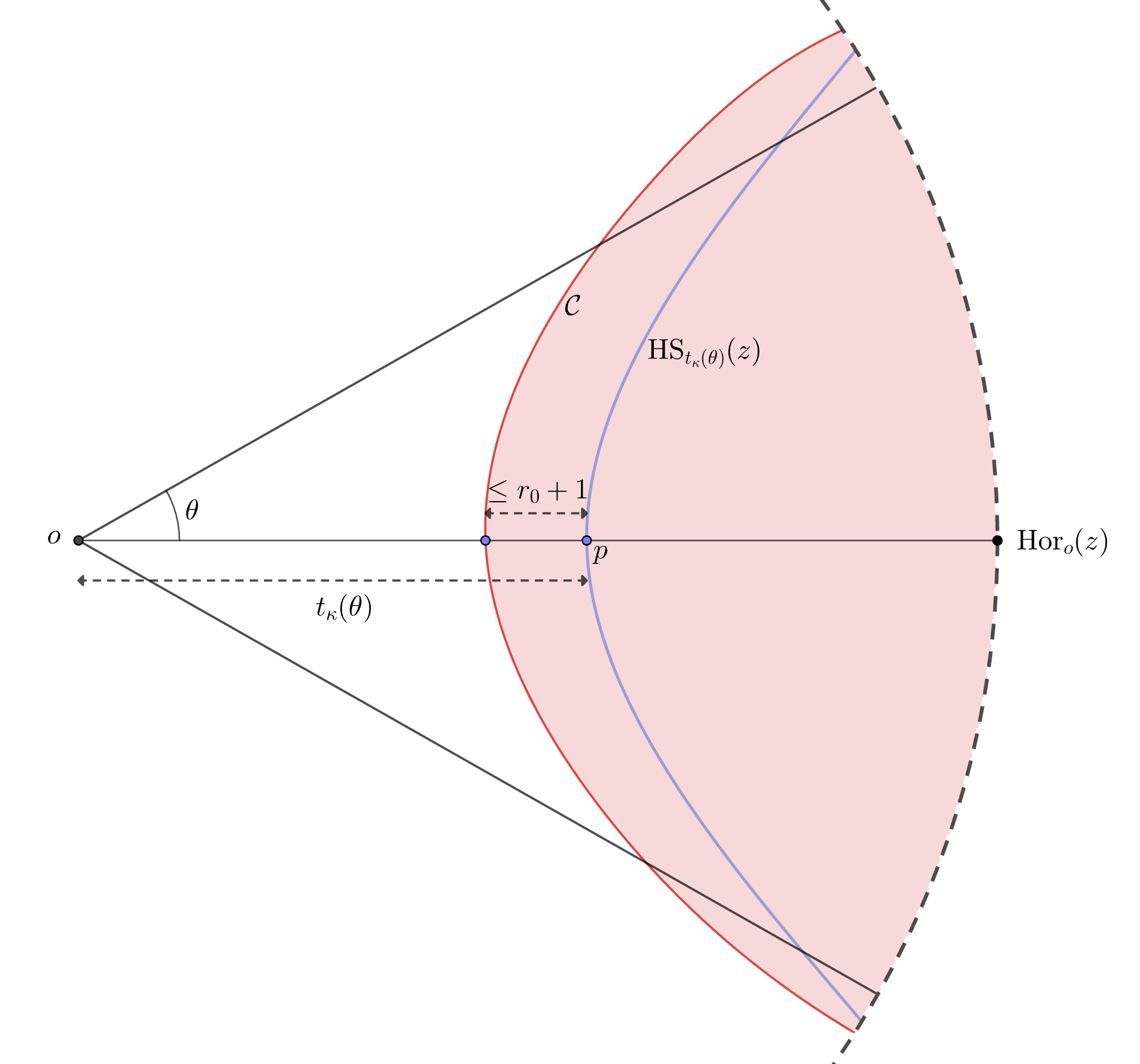}
  \caption{A schematic picture of the barrier construction in the pinched-curvature setting.}
  \label{fig:Picture4}
\end{figure}
\end{proof}

\begin{remark}
The existence of convex barriers in pinched Cartan--Hadamard manifolds can also be proved using hypersurfaces of constant curvature.
In dimension $n=3$ this may be done using constant \emph{extrinsic} curvature hypersurfaces; see \cite[Theorem~2.1.2]{AlvarezLoweSmith}. In higher dimensions one uses the notion of \emph{special Lagrangian} curvature introduced in \cite{Smith2013}; see \cite[Theorem~1.2.2]{Smith}.
\end{remark}

\section{Non-existence results via convex barriers}\label{Main}

As explained in the Introduction, the pinched Cartan--Hadamard case treated in
Theorem~\ref{thm:main} follows from a slightly more general non-existence result
whose hypotheses only require an upper curvature bound and the existence of a
suitable family of closed convex barriers. The purpose of this section is to
formulate this barrier framework precisely and to prove the corresponding
non-existence theorem.

From now on, assume that $(M^n,g)$ is Cartan--Hadamard and its sectional curvature
is bounded above by a negative constant,
\[
K\le -\kappa<0.
\]
We consider domains $\Omega\subset M^n$ with smooth boundary. When $\Omega$ has
$C^2$-boundary $\Sigma=\partial\Omega$, convexity of $\Omega$ admits an analytic
characterization: by a theorem of Choi~\cite[Theorem~4.1]{Choi}, $\Omega$ is convex
if and only if the second fundamental form $\II_\Sigma$ satisfies $\II_\Sigma\le 0$
with respect to the outward unit normal.

\medskip

We assume that $M^n$ admits convex barriers in the following sense: there exist
constants $a>0$, $\theta_0\in(0,\pi/2)$ and $C_1,C_2>0$ such that for every
$o\in M$, every $z\in S_oM^n$ and every $\theta\in(0,\theta_0)$ there exists a
closed convex set $\mathcal{C}\subset M^n$ with smooth boundary satisfying
\begin{itemize}
\item[(B1)] $o\notin \mathcal{C}$;
\item[(B2)] $\partial_\infty C_o(z,\theta)\subset \partial_\infty \mathcal{C}$;
\item[(B3)] $d(o,\mathcal{C})\ge \frac{1}{\sqrt{a}}\ln\!\Big(\frac{C_1}{\theta}\Big)-C_2$.
\end{itemize}

\begin{theorem}\label{ThHausdorff}
Let $(M^n,g)$ be Cartan--Hadamard with bounded sectional curvature $K\le -\kappa<0$.
Assume that $M$ admits convex barriers (B1)--(B3) for some $a>0$. If $\Omega\subset M^n$
is a domain such that
\[
\dim_H(\partial_\infty\Omega) < \frac{n-1}{2}\cdot\frac{\sqrt{\kappa}}{\sqrt{a}},
\]
then there exists no nontrivial bounded solution $u\in C^2(\Omega)\cap C^0(\overline{\Omega})$
to the Dirichlet problem
\begin{equation}\label{Dirichlet}
\begin{cases}
\Delta u + f(u)=0 & \text{in } \Omega,\\
u=0 & \text{on } \partial\Omega,
\end{cases}
\end{equation}
where $|f(u)|\le \lambda |u|$, $f(-u)=-f(u)$, and $\lambda\le \frac{(n-1)^2\kappa}{4}$.
Moreover, if $f$ is non-decreasing, the same holds for viscosity solutions.
\end{theorem}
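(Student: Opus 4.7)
The plan is to run a barrier-plus-capacity argument that, up to the geometric input of (B1)--(B3), follows the scheme of \cite{BonorinoKlaser} in the hyperbolic setting.

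First I would reduce to the subsolution case. Suppose for contradiction a nontrivial bounded $u$ exists; using $f(-u)=-f(u)$, after replacing $u$ by $-u$ if necessary I may assume $\sup u>0$, and then restrict to $\Omega^{+}:=\{u>0\}$, on which $u>0$, $u=0$ on $\partial\Omega^{+}$ (by continuity), and $\partial_\infty\Omega^{+}\subset\partial_\infty\Omega$, preserving the dimension hypothesis. Since $f(u)\le \lambda u$ for $u\ge 0$, $u$ becomes a classical subsolution of $\Delta w+\lambda w=0$ in $\Omega^{+}$. Renaming $\Omega^{+}\to\Omega$ and writing $M:=\sup_\Omega u<\infty$, it suffices to show $u(o)=0$ for every $o\in\Omega$.

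Next, I build barriers. Fix $o\in\Omega$ and $\epsilon>0$, and pick $s_0\in\bigl(\dim_H(\partial_\infty\Omega),s\bigr)$ with $s:=\frac{(n-1)\sqrt{\kappa}}{2\sqrt{a}}$. Since $\mathcal{H}^{s_0}(\partial_\infty\Omega)=0$ and $\partial_\infty\Omega$ is compact, I cover it by finitely many visual balls $B_{d_o}(\xi_i,r_i)$ with $r_i$ uniformly small and $\sum_i r_i^{s_0}<\epsilon$. By Lemma~\ref{lemmaDimension}, each ball lies in $\partial_\infty C_o(z_i,\theta_i)$ with $\theta_i\le C r_i$. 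I apply (B1)--(B3) to obtain closed smooth convex sets $\mathcal{C}_i$ with $o\notin\mathcal{C}_i$, $\partial_\infty C_o(z_i,\theta_i)\subset\partial_\infty\mathcal{C}_i$, and $d(o,\mathcal{C}_i)\ge \tfrac{1}{\sqrt{a}}\ln(C_1/\theta_i)-C_2$. Since $K\le-\kappa$ forces $\Ric\le-(n-1)\kappa$, Lemma~\ref{EDO-v} then gives smooth, positive barriers $v_i:=h\circ d_{\Sigma_i}$ outside $\mathcal{C}_i$, with $\Delta v_i+\lambda v_i\le 0$ in $\{d_{\Sigma_i}\ge t_0\}$ and $h$ normalized so that $h(t_0)=1$. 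Combining the Bonorino--Klaser exponential decay with (B3) produces the key matching estimate
\[
v_i(o)\le C\,e^{-(n-1)\sqrt{\kappa}\,d(o,\mathcal{C}_i)/2}\le C'\,\theta_i^{s}\le C''\,r_i^{s_0},
\]
so that $\sum_i v_i(o)\le C''\epsilon$; the equality of the two exponents on the right is precisely where the hypothesis on $\dim_H(\partial_\infty\Omega)$ enters.

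Finally, I compare via a gauge. Set $V:=M\sum_i v_i$ and $\Omega_\epsilon:=\Omega\setminus\bigcup_i\mathcal{C}_i^{t_0}$, where $\mathcal{C}_i^{t_0}$ is the closed $t_0$-tube around $\mathcal{C}_i$. In $\Omega_\epsilon$, $V>0$ is a smooth supersolution of $\Delta w+\lambda w=0$; on $\partial\Omega\cap\Omega_\epsilon$, $u=0\le V$; on $\Omega\cap\partial\mathcal{C}_i^{t_0}$, $v_i=h(t_0)=1$ forces $V\ge M\ge u$. With $w:=u/V\ge 0$, a direct computation using $\Delta u+\lambda u\ge 0$ and $\Delta V+\lambda V\le 0$ yields $\mathrm{div}(V^2\nabla w)\ge 0$, whose divergence-form weak maximum principle (no zeroth-order term) delivers $w\le 1$ throughout $\Omega_\epsilon$. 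Choosing $\epsilon$ small enough that $o\in\Omega_\epsilon$ (possible since $d(o,\mathcal{C}_i)\to\infty$ as $\theta_i\to 0$ by (B3)), I obtain $u(o)\le V(o)\le C''M\epsilon$, and $\epsilon\to 0$ forces $u(o)=0$, the desired contradiction. For viscosity solutions, the divergence-form comparison is replaced by Theorem~\ref{thm:manifoldmaximum}: since $V\ge 0$ and $f(V)\le\lambda V$ make $V$ a classical supersolution of the full nonlinear equation, and $f$ is non-decreasing, the manifold viscosity comparison (applied after the reflection $u\leadsto -u$, $V\leadsto -V$ permitted by $f$ odd) yields $u\le V$ in $\Omega_\epsilon$. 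The hardest point is the geometric claim that $\Omega_\epsilon$ is bounded, so that $\partial\Omega_\epsilon$ reduces to the two pieces above. The standard fix is to inflate each cone $C_o(z_i,\theta_i)$ slightly when invoking (B1)--(B3) and use cone-topology compactness to show the inflated barriers absorb $\Omega$ outside a large compact set; the uniform lower bound (B3) on $d(o,\mathcal{C}_i)$ is simultaneously what keeps $o$ inside $\Omega_\epsilon$ as $\epsilon\to 0$.
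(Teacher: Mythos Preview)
Your argument is correct and reaches the conclusion, but it follows a genuinely different comparison route from the paper. Both proofs begin identically: cover $\partial_\infty\Omega$ by small visual balls, invoke (B1)--(B3) to produce convex barriers $\mathcal C_i$, and combine Lemma~\ref{EDO-v} with the Bonorino--Klaser exponential decay of $h$ and the distance bound (B3) to show $\sum_i v_i(o)$ is controlled by $\sum_i r_i^{s}$. The divergence comes at the comparison step. You pass to $\Omega^+=\{u>0\}$, form the gauge $w=u/V$ with $V=M\sum_i v_i$, and use $\Delta u+\lambda u\ge 0$, $\Delta V+\lambda V\le 0$ to obtain $\mathrm{div}(V^2\nabla w)\ge 0$; the weak maximum principle on $\Omega_\epsilon$ then yields $u\le V$ directly, so $u(o)\le V(o)\to 0$. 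The paper instead keeps $u$ as is, sets $w=u-v$, isolates the component $D\ni o$ of $\{w>0\}$, observes $\Delta w+\lambda w\ge 0$ in $D$ with $w=0$ on $\partial D$, and derives a contradiction through the eigenvalue characterization of Proposition~\ref{prop:Lambda} together with McKean's bound $\lambda_0(M)\ge (n-1)^2\kappa/4$.

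Your route is the direct descendant of the Bonorino--Klaser hyperbolic argument and is more elementary in that it avoids the spectral machinery; moreover, by extracting a \emph{finite} subcover you sidestep the $C^{2,\alpha}$-convergence argument for the infinite series $\sum_i v_i$ that the paper must carry out via Schauder estimates. The paper's spectral route buys a cleaner treatment of the viscosity case, since Proposition~\ref{prop:Lambda} is already formulated for viscosity supersolutions of the linear equation, whereas you need the reflection $u\mapsto -u$, $V\mapsto -V$ (using $f$ odd) to match the sub/super roles in Theorem~\ref{thm:manifoldmaximum}. Both approaches share the same delicate geometric point you flag---boundedness of the comparison domain---and your cone-inflation remedy via cone-topology compactness is the right idea; note, however, that under the hypothesis $K\le -\kappa$ alone (no lower curvature bound), the claim that rays aimed at $\partial_\infty\mathcal C_i$ eventually enter $\mathcal C_i^{t_0}$ relies on the structure of the barriers themselves (e.g.\ that each $\mathcal C_i$ contains a half-space, as in Theorem~\ref{thm:barriers-pinched}) rather than on a generic asymptotic-geodesic argument.
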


\begin{proof}
Suppose, by contradiction, that a nontrivial bounded solution $u$ exists. Without loss of generality,
assume $u>0$ somewhere in $\Omega$ (since $f$ is odd), and fix $o\in\Omega$ such that $u(o)>0$.
Let $\Lambda:=2\sup_\Omega u$.

\smallskip
\noindent\emph{Step 1: the case $\partial_\infty\Omega=\{\xi\}$.}
Let $h$ be a positive solution of the ODE~\eqref{EDO_Hn}, and choose $t_1>t_0$
($t_0$ as in \cite[Lemma 3.1]{BonorinoKlaser}) such that
\[
\Lambda\cdot \frac{h(t_1)}{h(t_0)}<u(o).
\]
Let $\gamma:[0,\infty)\to M^n$ be the geodesic with $\gamma(0)=o$ and $\gamma(\infty)=\xi$.
By (B1)--(B3), we may choose a closed convex set $\mathcal{C}\subset M^n$ with smooth boundary
$\Sigma:=\partial\mathcal{C}$ such that $\partial_\infty\mathcal{C}\supset\{\xi\}$ and
\[
d(o,\Sigma)>t_1.
\]
Define
\[
v(p):=\Lambda\,\frac{h(d_\Sigma(p))}{h(t_0)}
\quad\text{in the component } M^+\subset M^n\setminus\Sigma \text{ containing } o.
\]
By Lemma~\ref{EDO-v}, $v\in C^\infty(M^+)$ and satisfies
\[
\Delta v + \lambda v \le 0 \quad \text{in } \Omega_0:=\{p\in\Omega:\ d_\Sigma(p)\ge t_0\}.
\]
Note that $o\in\Omega_0$ and $v(o)<u(o)$, hence $w:=u-v$ is positive near $o$.
On $\partial\Omega_0$, either $u=0<v$ or $v=\Lambda>u$, hence $w<0$ there.
Let $D\subset\Omega_0$ be the connected component of $\{w>0\}$ containing $o$.
Then $w>0$ in $D$, $w=0$ on $\partial D$, and
\[
\Delta w = -f(u)-\Delta v \ge -\lambda u + \lambda v = -\lambda w \quad \text{in } D.
\]
By Proposition~\ref{prop:Lambda}, it follows that
$\lambda\ge \lambda_1(D)>\lambda_1(M)\ge \frac{(n-1)^2\kappa}{4}$, a contradiction.

\begin{figure}[h!]
  \centering
  \includegraphics[width=0.5\linewidth]{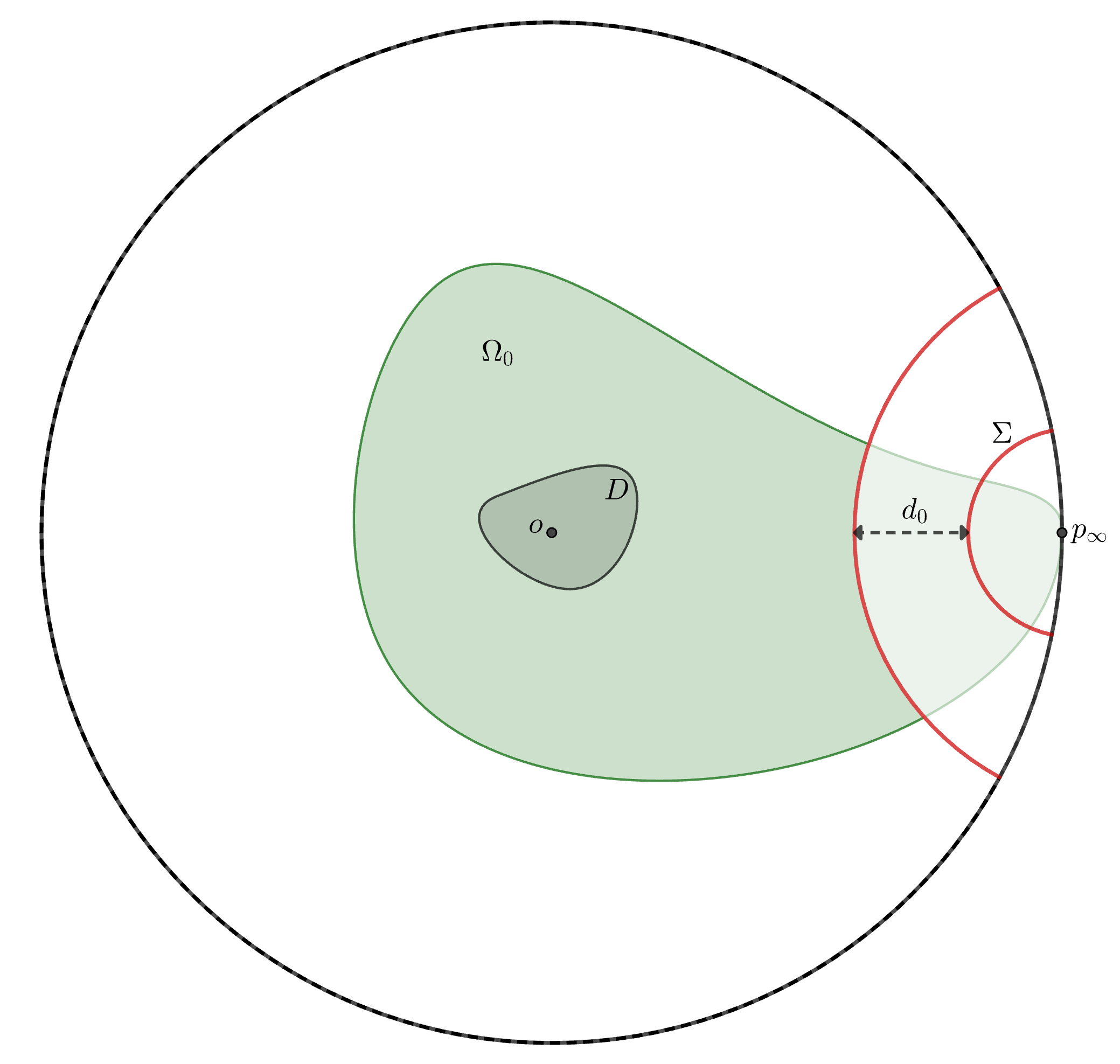}
  \caption{A schematic picture of the case $\partial_\infty\Omega=\{\xi\}$.}
  \label{fig:Picture2}
\end{figure}

\smallskip
\noindent\emph{Step 2: the general case.}
Assume now that
\[
\dim_H(\partial_\infty\Omega) < \frac{n-1}{2}\cdot\frac{\sqrt{\kappa}}{\sqrt{a}}.
\]
Let $d_o$ be the visual metric on $\partial_\infty M^n$ defined by $d_o(\xi,\eta)=e^{-\sqrt\kappa(\xi|\eta)_o}$.
For any $\varepsilon>0$, there exist points $\{\xi_i\}_{i\ge 1}\subset\partial_\infty M^n$ and radii
$\{r_i\}_{i\ge 1}\subset(0,\infty)$ such that
\[
\partial_\infty\Omega \subset \bigcup_{i\ge 1}B_{d_o}(\xi_i,r_i)
\quad\text{and}\quad
\sum_{i\ge 1} r_i^{\frac{n-1}{2}\frac{\sqrt{\kappa}}{\sqrt{a}}}<\varepsilon,
\]
where $B_{d_o}(\xi_i,r_i)$ denotes the $d_o$-ball centered at $\xi$ of radius $r$.

Let $S\subset S_oM^n$ be the preimage of $\partial_\infty\Omega$ under the horizon map.
For each $i$, let $z_i\in S_oM$ be such that $\mathrm{Hor}_o(z_i)=\xi_i$, and define
\[
\theta_i := \max\Bigl\{ \measuredangle_o(z_i,w)\ :\ \mathrm{Hor}_o(w)\in B_{d_o}(\xi_i,r_i)\Bigr\}.
\]
Then $(\cup_i C_o(z_i,\theta_i))\cap S_oM$ covers $S$. Assuming $r_i$ small enough (shrinking $\varepsilon$
if needed), Lemma~\ref{lemmaDimension} implies that there exists $C_3>0$ depending only on $\kappa$ such that
\begin{equation}\label{eq:relationAngles}
C_3\,\theta_i \le r_i.
\end{equation}

For each $i$, let $\mathcal{C}_i$ be a barrier associated to $C_o(z_i,\theta_i)$, with boundary
$\Sigma_i:=\partial\mathcal{C}_i$, and set
\[
v_i := \Lambda\,\frac{h\circ d_{\Sigma_i}}{h(t_0)},
\qquad
\Omega_i := \{p\in\Omega:\ d_{\Sigma_i}(p)\ge t_0\},
\qquad
\Omega_0:=\bigcap_i \Omega_i.
\]
By Lemma~\ref{EDO-v}, each $v_i\in C^\infty(\Omega_i)$ satisfies
\[
\Delta v_i + \lambda v_i \le 0 \quad \text{in } \Omega_i.
\]
Moreover, using (B3) and the exponential decay of $h$ from \cite[Lemma 3.1]{BonorinoKlaser}, we obtain
\[
v_i(o)\le C\,\theta_i^{\frac{n-1}{2}\frac{\sqrt{\kappa}}{\sqrt{a}}},
\]
where $C$ depends only on $\kappa,n,a,C_0,C_1,C_2$ (and the decay constants for $h$). 
Hence, using \eqref{eq:relationAngles},
\[
\sum_{i\ge 1} v_i(o)
\le C\sum_{i\ge 1}\theta_i^{\frac{n-1}{2}\frac{\sqrt{\kappa}}{\sqrt{a}}}
\le \tilde C \sum_{i\ge 1} r_i^{\frac{n-1}{2}\frac{\sqrt{\kappa}}{\sqrt{a}}}
< \tilde C\,\varepsilon,
\]
for some $\tilde C>0$. Choosing $\varepsilon>0$ small, we ensure that 
\[
v(o):=\sum_i v_i(o)<u(o).
\]

Furthermore, since each $v_i$ satisfies a linear elliptic inequality with uniform exponential decay, by
Harnack's inequality and standard interior Schauder estimates (\cite[\S 6.4, \S 8.8]{GilbargTrudinger}),
the partial sums $\sum_{i=1}^N v_i$ are locally uniformly bounded in $C^{2,\alpha}$ on compact subsets of
$\Omega_0$, for some $\alpha\in(0,1)$. It follows that $v:=\sum_i v_i$ converges in $C^2$ on compact subsets
of $\Omega_0$, and the limit $v\in C^\infty(\Omega_0)$ satisfies
\[
\Delta v + \lambda v \le 0 \quad \text{in } \Omega_0.
\]

Since $v(o)<u(o)$, the function $w:=u-v$ is positive near $o$ and vanishes on $\partial(\Omega\cap\Omega_0)$.
Let $D\subset \Omega\cap\Omega_0$ be the connected component of $\{w>0\}$ containing $o$. Then $w\in H^1_0(D)$,
$w>0$ in $D$, and
\[
\Delta w = -f(u)-\Delta v \ge -\lambda u + \lambda v = -\lambda w \quad \text{in } D.
\]
Applying Proposition~\ref{prop:Lambda} and using \cite{McKean}, we obtain
$\lambda\ge \lambda_1(D)>\lambda_0(M)\ge \frac{(n-1)^2\kappa}{4}$, a contradiction. This concludes the proof in the smooth case.

Assume in addition that $f$ is nondecreasing and let $u\in C^0(\overline\Omega)$ be a bounded
viscosity solution of \eqref{Dirichlet}. The construction of the barriers $v$ (in Step~1) and
$v=\sum_i v_i$ (in Step~2) is unchanged, since each of them is smooth on the corresponding set
$\Omega_0$ and satisfies $\Delta v+\lambda v\le 0$ there. Define $w:=u-v$ on $\Omega\cap\Omega_0$ and
let $D$ be the connected component of $\{w>0\}$ containing $o$. Then $w\in C^0(\overline D)$ is a viscosity supersolution, $w>0$ in $D$, and $w=0$ on $\partial D$.

Thus Proposition~\ref{prop:Lambda} applies to $w$ in $D$ and yields $\lambda_1(D)\le \lambda$.
The remainder of the argument is identical, giving the same contradiction as in the classical case.
\end{proof}

\begin{remark}\label{rem:pinched-barriers}
By Theorem~\ref{thm:barriers-pinched}, any pinched Cartan--Hadamard manifold
satisfies the barrier assumptions \emph{(B1)--(B3)}. More precisely, one may take
\[
a=\kappa_1
\qquad\text{and}\qquad
C_1=1
\]
in \emph{(B3)} (with the same $\theta_0\in(0,\pi/2)$ and  constant $C_2=r_0>0$, the constant provided by Theorem \ref{thm:Anderson}.
\end{remark}

\medskip

\noindent\textit{Hence, Theorem~1.1 follows.}


\appendix
\section{Proof of Theorem \ref{thm:Anderson}}\label{appendixA}

In this appendix, we establish the proof of Theorem \ref{thm:Anderson}.
Although the convex set provided by Theorem \ref{thm:Anderson} is one of those constructed by Anderson in \cite{Anderson83}, we have not found any references in the literature where properties (i) and
(iii) are explicitly stated.
Therefore, in this appendix, we show how these
properties can be directly derived from the original construction.

Our derivation of items {\rm(i)}--{\rm(iii)} adapts the approach detailed in
\cite[Theorem 3.5]{RipollTelichevesky}. The construction of $\mathcal{C}_p(w)$
begins with the following result, which builds upon the foundations established
in \cite[Proposition 2.2]{Anderson83} and \cite[Lemma 3.6]{RipollTelichevesky}.

\begin{lemma}\label{lemmaA1}
Let $(M^n,g)$ be a Cartan--Hadamard manifold with sectional curvature
\[
-\kappa_1 \;\leq\; K \;\leq\; -\kappa_2 \;<\; 0,
\qquad 0<\kappa_2\le \kappa_1.
\]
Fix $p \in M^n$ and $w \in S_p M^n$. Given any $L>2$, let
$\phi : [0,+\infty ) \to \mathbb{R}$ be a smooth function such that
\[
0 \leq \phi \leq 1, \quad
\phi([0,1/2]) = 0, \quad
\phi \equiv 1 \ \text{on } [1,+\infty), \quad
\phi' \geq 0, \quad
|\phi'|, |\phi''| \leq L.
\]
For any point $q \in M^n$, define
$f_q (x) = \phi (\rho_q (x))$ for all $x \in M^n$.
Then there exists $\varepsilon_0 = \varepsilon_0 (\kappa_1, \kappa_2, L) > 0$
such that, for any $\varepsilon \in (0, \varepsilon_0)$, the sublevel set
\[
\{ x \in M^n : \rho_p (x) - \varepsilon f_q (x) \leq R \}
\]
is strictly convex for all $R>0$.
\end{lemma}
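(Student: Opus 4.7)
The plan is to apply the standard differential criterion for strict geodesic convexity of the sublevel sets $\{F\le R\}$ of $F:=\rho_p-\varepsilon f_q$: I will show that $\nabla F\ne 0$ everywhere and that $\Hess F(v,v)>0$ for every $v\in T_xM^n\setminus\{0\}$ with $v\perp\nabla F(x)$, at every $x\in M^n$. Since $\nabla F/|\nabla F|$ is then the outward unit normal to every level set $\{F=R\}$, this is equivalent to the second fundamental form of the level set being positive definite, and hence to strict geodesic convexity of every non-empty sublevel set.

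Because $\phi$ vanishes on $[0,1/2]$ and is constantly $1$ on $[1,+\infty)$, $f_q$ is locally constant outside the annulus $A:=\{x\in M^n:1/2\le \rho_q(x)\le 1\}$, and on $M^n\setminus A$ one has $F=\rho_p+\mathrm{const.}$ The sublevel sets of $\rho_p$ are geodesic balls centred at $p$, which are strictly convex in a Cartan--Hadamard manifold with $K\le-\kappa_2$: by the Hessian comparison theorem,
\[
\Hess\rho_p(v,v)\ge \sqrt{\kappa_2}\coth(\sqrt{\kappa_2}\rho_p)\,
|v-\langle v,\nabla\rho_p\rangle\nabla\rho_p|^2\ge \sqrt{\kappa_2}\,|v_\perp^p|^2,
\]
where $v_\perp^p$ denotes the component of $v$ orthogonal to $\nabla\rho_p$, and the right-hand side is positive whenever $v$ is not parallel to $\nabla\rho_p$.

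All the real work is concentrated on the annulus $A$, and I would proceed in three steps. First, from $|\nabla\rho_p|=1$ and $|\nabla f_q|=|\phi'(\rho_q)|\le L$ one has $|\nabla F|\ge 1-\varepsilon L>0$ as soon as $\varepsilon<1/L$. Second, the identity
\[
\Hess f_q=\phi''(\rho_q)\,d\rho_q\otimes d\rho_q
+\phi'(\rho_q)\,\Hess\rho_q,
\]
the upper Hessian bound $\Hess\rho_q\le\sqrt{\kappa_1}\coth(\sqrt{\kappa_1}\rho_q)(g-d\rho_q\otimes d\rho_q)$ (valid because $K\ge-\kappa_1$), and $\rho_q\ge 1/2$ on $A$ yield a uniform pointwise bound $\|\Hess f_q\|\le C(\kappa_1)L$ with $C(\kappa_1):=1+\sqrt{\kappa_1}\coth(\sqrt{\kappa_1}/2)$. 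Third, for $v\perp\nabla F$ one has $\langle v,\nabla\rho_p\rangle=\varepsilon\langle v,\nabla f_q\rangle$, so $|\langle v,\nabla\rho_p\rangle|\le\varepsilon L|v|$ and hence $|v_\perp^p|^2\ge(1-\varepsilon^2L^2)|v|^2$. Combining these three estimates and the uniform lower bound $\coth(\sqrt{\kappa_2}\rho_p)\ge 1$ gives
\[
\Hess F(v,v)\ge\bigl(\sqrt{\kappa_2}(1-\varepsilon^2L^2)
-\varepsilon\,C(\kappa_1)L\bigr)|v|^2,
\]
which is strictly positive once $\varepsilon$ is smaller than some $\varepsilon_0=\varepsilon_0(\kappa_1,\kappa_2,L)$.

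The main delicate point is making the Hessian comparison bounds uniform on $A$: the lower bound for $\rho_p$ relies only on $\coth\ge 1$ and so is insensitive to how close $x$ lies to $p$, while the upper bound for $\rho_q$ uses only $\rho_q\ge 1/2$, which is built into the cut-off $\phi$. With these two uniform bounds in hand, the choice of $\varepsilon_0$ depending only on $\kappa_1,\kappa_2,L$ is immediate, and the differential criterion yields strict geodesic convexity of every non-empty sublevel set $\{F\le R\}$.
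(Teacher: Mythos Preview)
Your argument is correct and follows essentially the same route as the paper: both use the Hessian comparison bounds (upper bound on $\Hess\rho_q$ from $K\ge-\kappa_1$ together with $\rho_q\ge 1/2$ on the support of $\phi'$, lower bound on $\Hess\rho_p$ from $K\le-\kappa_2$ and $\coth\ge 1$), the tangency condition $\langle v,\nabla\rho_p\rangle=\varepsilon\langle v,\nabla f_q\rangle$ to control the radial component of $v$, and the gradient bound $|\nabla F|\ge 1-\varepsilon L$ to ensure regularity of the level sets. Your explicit separation of the argument into the annulus $A$ and its complement is a minor organisational difference, but the estimates and the choice of $\varepsilon_0$ coincide with those in the paper.
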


\begin{proof}
Set
\[
G(x) = \rho_p(x) - \varepsilon f_q(x),
\qquad
f_q(x) = \phi(\rho_q(x)).
\]
Since $M^n$ is Cartan--Hadamard, the distance functions $\rho_p$ and $\rho_q$ are smooth on $M^n \setminus \{p\}$ and $M^n \setminus \{q\}$ respectively, and there is no cut locus.

We first obtain uniform bounds on $\nabla f_q$ and $D^2 f_q$.
By the chain rule,
\[
\nabla f_q = \phi'(\rho_q)\,\nabla \rho_q,
\]
so that, using $|\phi'|\le L$ and $|\nabla \rho_q| = 1$,
\[
|\nabla f_q| \le L.
\]

For the Hessian, again by the chain rule,
\[
D^2 f_q
= \phi''(\rho_q)\, d\rho_q \otimes d\rho_q
  + \phi'(\rho_q)\,D^2 \rho_q.
\]
The support of $\phi'$ and $\phi''$ is contained in $\{\rho_q \in [1/2,1]\}$,
so on the region where $D^2 f_q$ may be nonzero we have $\rho_q \ge 1/2$.
By the Hessian comparison theorem applied to the lower curvature bound
$K \ge -\kappa_1$, there exists a constant
\[
C(\kappa_1)
:= \sqrt{\kappa_1}\,\coth\!\big(\sqrt{\kappa_1}/2\big)
\]
such that, for every vector $W$,
\[
|D^2 \rho_q(W,W)| \le C(\kappa_1)\,|W|^2.
\]
Hence, using $|\phi'|,|\phi''|\le L$ and $|d\rho_q(W)|\le|W|$, we obtain
\[
|D^2 f_q(W,W)|
\le L\,|W|^2 + L\,C(\kappa_1)\,|W|^2
= C_2(\kappa_1,L)\,|W|^2,
\]
for some constant $C_2(\kappa_1,L) > 0$.

Next, we estimate the Hessian of $\rho_p$. By the Hessian comparison theorem
for the upper curvature bound $K \le -\kappa_2$, we have, for all vectors $W$,
\[
D^2 \rho_p(W,W)
\ge
\sqrt{\kappa_2}\,\coth\!\big(\sqrt{\kappa_2}\,\rho_p\big)
\Big(|W|^2 - \langle W,\nabla \rho_p\rangle^2\Big).
\]
In particular, since $\rho_p>0$ away from $p$ and $\coth t\ge 1$ for $t>0$,
\begin{equation}\label{Hessiancomparison}
D^2 \rho_p(W,W)
\ge
\sqrt{\kappa_2}
\Big(|W|^2 - \langle W,\nabla \rho_p\rangle^2\Big).
\end{equation}

Let $\Sigma_R = \{x \in M : G(x) = R\}$ be a regular level set.
Its unit normal vector is
\[
\nu = \frac{\nabla G}{|\nabla G|}.
\]
To prove that the sublevel set $\{G \le R\}$ is strictly convex, it suffices
to show that the second fundamental form of $\Sigma_R$ with respect to $\nu$
is positive definite (\cite[Theorem 4.1]{Choi}), i.e., that
\[
D^2 G(W,W) > 0
\quad\text{for every } W \in T_x\Sigma_R \setminus \{0\}.
\]

Let $x \in \Sigma_R$ and $W \in T_x\Sigma_R$.
Since $W$ is tangent to $\Sigma_R$ and $\nabla G$ is normal, we have
\[
\langle \nabla G, W\rangle = 0,
\]
that is,
\[
\langle \nabla \rho_p, W\rangle
= \varepsilon\, \langle \nabla f_q, W\rangle.
\]
Using $|\nabla f_q|\le L$, we obtain
\[
|\langle \nabla \rho_p, W\rangle|
\le \varepsilon L\,|W|.
\]
Substituting this into \eqref{Hessiancomparison}, we find
\[
D^2 \rho_p(W,W)
\ge
\sqrt{\kappa_2}\,\big(1 - \varepsilon^2 L^2\big)\,|W|^2.
\]

On the other hand,
\[
D^2 G(W,W)
= D^2 \rho_p(W,W) - \varepsilon\,D^2 f_q(W,W),
\]
and by the bound on $D^2 f_q$ we get
\[
D^2 G(W,W)
\ge
\sqrt{\kappa_2}\,\big(1 - \varepsilon^2 L^2\big)\,|W|^2
- \varepsilon\, C_2(\kappa_1,L)\,|W|^2.
\]

We now choose $\varepsilon_0>0$ such that
\[
\varepsilon_0 \le \frac{1}{2L}
\quad\text{and}\quad
\varepsilon_0 \le \frac{\sqrt{\kappa_2}}{2\,C_2(\kappa_1,L)}.
\]
Then, for every $\varepsilon \in (0,\varepsilon_0)$, we have
\[
1 - \varepsilon^2 L^2 \ge \tfrac34
\quad\text{and}\quad
\sqrt{\kappa_2}\,\big(1 - \varepsilon^2 L^2\big)
- \varepsilon C_2(\kappa_1,L)
\ge
\tfrac{3\sqrt{\kappa_2}}{4} - \varepsilon C_2(\kappa_1,L)
\ge
\tfrac{\sqrt{\kappa_2}}{4}.
\]
It follows that
\[
D^2 G(W,W)
\ge
\frac{\sqrt{\kappa_2}}{4}\,|W|^2
> 0,
\]
for all $W \in T_x\Sigma_R \setminus\{0\}$.

Finally, we check that $\Sigma_R$ is indeed a smooth hypersurface.
Since
\[
\nabla G
= \nabla \rho_p - \varepsilon \nabla f_q,
\]
we have
\[
|\nabla G|
\ge \big||\nabla \rho_p| - \varepsilon|\nabla f_q|\big|
\ge |1 - \varepsilon L|.
\]
For $\varepsilon \in (0,\varepsilon_0)$ as above, this gives
\[
|\nabla G| \ge \tfrac12 > 0,
\]
so $G$ has no critical points on $\Sigma_R$, and thus $\Sigma_R$ is smooth.

Therefore, for every $R>0$ and every $\varepsilon \in (0,\varepsilon_0)$,
the sublevel sets
\[
\{x \in M : G(x) \le R\}
=
\{x \in M : \rho_p(x) - \varepsilon f_q(x) \le R\}
\]
are strictly convex.
\end{proof}

\medskip

Let $r_0>0$ be a constant to be determined later, and fix $\varepsilon\le \varepsilon_0$,
where $\varepsilon_0$ is given by Lemma~\ref{lemmaA1}.
We now follow the inductive scheme in the proof of \cite[Theorem 3.5]{RipollTelichevesky},
adding a few details for the reader's convenience.

\medskip
Set
\[
C_0 = B_p(r_0), \qquad
T_0 = C_p(w, \pi/4) \cap S_p(r_0), \qquad
D_0 = \emptyset.
\]
Thus $C_0$ is our initial convex core, while $T_0$ is the portion of the sphere
$S_p(r_0)$ where we start carving ``scallops'' (the initial seam).

To quantify the maximal angular spread (as seen from $p$) of points within unit
distance of the seam, define
\begin{equation}\label{eq:theta0}
\theta_0
= \sup \Big\{ \measuredangle_p\big(\gamma_{p,q}'(0), \gamma_{p,x}'(0)\big)
:\ q \in T_0,\ x \in S_q(1) \Big\}.
\end{equation}
(Here $\gamma_{p,y}$ denotes the minimizing geodesic from $p$ to $y$.)

\medskip
\noindent\textbf{Step 1: the first scalloping.}
For each $q \in T_0$, consider the closed sublevel set
\begin{equation}\label{eq:C1}
C_{1,q} := \big\{ x \in M : \rho_p(x) - \varepsilon f_q(x) \le r_0 \big\}.
\end{equation}
By Lemma~\ref{lemmaA1}, for our choice of $\varepsilon$ the set $C_{1,q}$ is
convex for every $q\in T_0$. We then define
\[
\widetilde C_1 := \bigcap_{q\in T_0} C_{1,q},
\qquad
C_1 := \widetilde C_1\setminus D_0 = \widetilde C_1,
\qquad
r_1:=r_0+\varepsilon,
\]
and set
\[
D_1 := B_p(r_1)\setminus C_1,
\qquad
T_1 := \overline{S_p(r_1)\setminus \partial C_1}.
\]

\medskip
\noindent\textbf{Step 2: the general inductive step.}
Assume that $C_{k-1}, r_{k-1}, D_{k-1}, T_{k-1}$ have been constructed for some
$k\ge 2$. For each $q\in T_{k-1}$, define
\begin{equation}\label{eq:Ckq}
C_{k,q} := \big\{ x \in M : \rho_p(x) - \varepsilon f_q(x) \le r_{k-1} \big\},
\end{equation}
and then set
\begin{equation}\label{eq:inductive_definitions}
\begin{split}
\widetilde C_k &:= \bigcap_{q \in T_{k-1}} C_{k,q}, \qquad
C_k := \widetilde C_k \setminus D_{k-1}, \qquad
r_k := r_0 + k\,\varepsilon,\\
D_k &:= B_p(r_k) \setminus C_k, \qquad
T_k := \overline{S_p(r_k) \setminus \partial C_k},
\end{split}
\end{equation}
and define the corresponding angular spread parameter
\begin{equation}\label{eq:thetak}
\theta_k
= \sup \Big\{ \measuredangle_p\big(\gamma_{p,q}'(0), \gamma_{p,x}'(0)\big)
:\ q \in T_k,\ x \in S_q(1) \Big\}.
\end{equation}

\medskip
\noindent\textbf{Convexity and angular control.}
It follows from Claims~1 and~2 in the proof of \cite[Theorem 3.5]{RipollTelichevesky}
(that is, the convexity of $C_k$ and the propagation of the angular control) that
$C_k$ is convex for all $k$, and that points removed at the $k$-th step remain
confined in a controlled angular sector. More precisely, if we set
\[
\alpha_k := \frac{\pi}{4} + \sum_{i=0}^{k-1}\theta_i,
\]
then
\begin{equation}\label{eq:angular_control}
\measuredangle_p\big(w, \gamma_{p,x}'(0)\big) \le \alpha_k,
\qquad \forall\, x \in D_k.
\end{equation}
Consequently, the inclusions $C_{k-1}\subsetneq C_k$ and
$C_p(-w,\pi-\alpha_k)\subset C_k$ hold for every $k\in\mathbb N$.

Finally, defining
\[
\alpha_\infty := \lim_{k\to\infty}\alpha_k
= \frac{\pi}{4} + \sum_{i=0}^{\infty}\theta_i,
\qquad
\mathcal C := \bigcup_{k\ge 0} C_k,
\]
we conclude that $\mathcal C$ is convex (as an increasing union of nested convex
sets) and contains the cone $C_p(-w,\pi-\alpha_\infty)$.

\medskip

At this stage, we verify that the convex set $\mathcal C := \bigcup_{k\ge 0} C_k$
constructed above satisfies items {\rm(i)}--{\rm(iii)} of Theorem~\ref{thm:Anderson},
provided $r_0$ is chosen large enough (depending only on the curvature pinching).

\begin{lemma}\label{lemmaA2}
There exists a constant $R^*=R^*(\kappa_1,\kappa_2)>0$ with the following property.
If $r_0\ge R^*$, then $\alpha_\infty<\pi/2$, and consequently
\[
H_p(-w)\subset \mathcal C,
\]
where $H_p(-w)=C_p(-w,\pi/2)$ denotes the geodesic half-space at $p$ with inward
normal $-w$.

Moreover, for every unit vector $u\in S_pM$ with $\measuredangle_p(u,w)\le \pi/4$
and every $t\ge r_0$, one has
\[
d(\gamma_{p,u}(t),\mathcal C)=t-r_0.
\]
In particular, item {\rm(iii)} of Theorem~\ref{thm:Anderson} holds (take $u=w$).
\end{lemma}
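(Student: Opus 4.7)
The plan addresses the two assertions in turn. For the first, I will show that the angular spreads $\theta_k$ from \eqref{eq:thetak} decay geometrically in $k$, which forces $\alpha_\infty<\pi/2$ once $r_0$ is sufficiently large; the inclusion $H_p(-w)\subset\mathcal C$ then follows at once from the already established containment $C_p(-w,\pi-\alpha_\infty)\subset\mathcal C$, since $\pi-\alpha_\infty>\pi/2$. The estimate on $\theta_k$ proceeds by comparison: for $q\in T_k\subset S_p(r_k)$ and $x\in S_q(1)$, the geodesic triangle $(p,q,x)$ in $M^n$ has side lengths $r_k$, $1$ and $d(p,x)\in[r_k-1,r_k+1]$. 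Since $K\le -\kappa_2$, the $\mathrm{CAT}(-\kappa_2)$ comparison bounds the angle at $p$ in $M^n$ by the comparison angle $\tilde\alpha_k$ in $\mathbb H^2(-\kappa_2)$; the hyperbolic law of sines yields $\sin\tilde\alpha_k\le \sinh(\sqrt{\kappa_2})/\sinh(\sqrt{\kappa_2}\,r_k)$, hence $\theta_k\le C(\kappa_2)\,e^{-\sqrt{\kappa_2}\,r_k}=C(\kappa_2)\,e^{-\sqrt{\kappa_2}(r_0+k\varepsilon)}$. Summing the geometric series and choosing $R^*=R^*(\kappa_1,\kappa_2)$ so large that this tail is strictly less than $\pi/4$ gives $\alpha_\infty=\pi/4+\sum_k\theta_k<\pi/2$ for every $r_0\ge R^*$.

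For the distance formula, fix $u\in S_pM^n$ with $\measuredangle_p(u,w)\le\pi/4$ and write $q_0:=\gamma_{p,u}(r_0)$. The upper bound $d(\gamma_{p,u}(t),\mathcal C)\le t-r_0$ is immediate from $q_0\in B_p(r_0)\subset\mathcal C$. I next show by induction on $k$ that $\gamma_{p,u}(r_k)\in T_k$ and $\gamma_{p,u}(s)\notin\mathcal C$ for every $s\in(r_k,r_{k+1}]$. The base case is the definition $T_0=C_p(w,\pi/4)\cap S_p(r_0)$. Assuming $\varepsilon\le 1/2$, the key computation for the inductive step is: for $q=\gamma_{p,u}(r_k)\in T_k$ and $x=\gamma_{p,u}(s)$ with $s-r_k\in(0,\varepsilon]$ one has $\rho_q(x)=s-r_k<1/2$, so $f_q(x)=\phi(s-r_k)=0$ and $\rho_p(x)-\varepsilon f_q(x)=s>r_k$; hence $x\notin C_{k+1,q}$, and since $C_{k+1}\subset\widetilde C_{k+1}\subset C_{k+1,q}$ we get $x\notin C_{k+1}$. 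Since $x\in B_p(r_{k+1})$ this gives $x\in D_{k+1}$, and one easily checks from the construction that $D_j\subset D_{j+1}$ together with $D_j\cap C_j=\emptyset$; thus $x\notin C_j$ for every $j$, i.e., $\gamma_{p,u}(s)\notin\mathcal C$ for every $s>r_0$.

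To upgrade this pointwise statement into the distance equality I analyze $\mathcal C$ locally around $q_0$. The same bump argument applied with $q=q_0\in T_0$ shows that any $x$ with $\rho_{q_0}(x)<\varepsilon$ and $\rho_p(x)>r_0$ lies in $D_1$, hence in $\mathcal C^c$; consequently $\mathcal C\cap B_{q_0}(\varepsilon)\subset B_p(r_0)\cap B_{q_0}(\varepsilon)$, i.e., $\mathcal C$ coincides with the ball $B_p(r_0)$ inside $B_{q_0}(\varepsilon)$. For $\delta\in(0,\varepsilon/2)$, any $y\in\mathcal C$ outside $B_{q_0}(\varepsilon)$ satisfies $d(y,\gamma_{p,u}(r_0+\delta))\ge\varepsilon-\delta>\delta$, while any $y\in\mathcal C\cap B_{q_0}(\varepsilon)\subset B_p(r_0)$ satisfies $d(y,\gamma_{p,u}(r_0+\delta))\ge d(\gamma_{p,u}(r_0+\delta),B_p(r_0))=\delta$, with equality at $y=q_0$. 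Hence $d(\gamma_{p,u}(r_0+\delta),\mathcal C)=\delta$ for all such $\delta$. Finally, since $\mathcal C$ is convex in the Cartan--Hadamard manifold $M^n$, $g(t):=d(\gamma_{p,u}(t),\mathcal C)$ is a convex, $1$-Lipschitz function with $g(r_0)=0$ and right-slope $1$ at $r_0$; these properties force $g(t)=t-r_0$ for all $t\ge r_0$, concluding the proof. The main technical obstacle lies in this local structure step: one must verify that no $C_k$ with $k\ge 2$ reintroduces points of $\mathcal C$ into $B_{q_0}(\varepsilon)$ above the sphere $S_p(r_0)$, and this is precisely where the persistence $D_j\subset D_{j+1}$ and the disjointness $D_j\cap C_j=\emptyset$ built into the construction become essential.
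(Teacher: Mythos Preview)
Your proof is correct. For the bound $\alpha_\infty<\pi/2$ you follow the paper's line exactly, deriving the visual-angle decay $\theta_k\le C(\kappa_2)\,e^{-\sqrt{\kappa_2}\,r_k}$ directly via $\mathrm{CAT}(-\kappa_2)$ comparison and the hyperbolic law of sines rather than quoting it as a lemma; the dependence of $R^*$ on $\kappa_1$ enters, as in the paper, only through the choice of $\varepsilon\le\varepsilon_0(\kappa_1,\kappa_2)$.

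For the distance identity, however, your route is genuinely different from the paper's. The paper uses the global exclusion $\mathcal C\cap \mathcal T_p(w,\pi/2,r_0+1)=\emptyset$ (item~(ii), built into the construction) to argue that any $y\in\mathcal C$ either lies in $B_p(r_0)$ or makes a large angle with $u$, and then bounds $d(\gamma_{p,u}(t),y)$ from below by the triangle inequality in the first case and by Toponogov comparison (for $K\le 0$) in the second. You instead work locally: using that $q_0\in T_0$ and the bump profile of $f_{q_0}$ to show $\mathcal C$ coincides with $B_p(r_0)$ inside $B_{q_0}(\varepsilon)$, then invoking the monotonicity $D_j\subset D_{j+1}$ to rule out later stages reintroducing points there. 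This gives $g(t)=d(\gamma_{p,u}(t),\mathcal C)=t-r_0$ for $t-r_0$ small, and the convexity of $g$ along geodesics in $\mathrm{CAT}(0)$ together with the $1$-Lipschitz bound forces the same formula for all $t\ge r_0$. This is an elegant globalization that avoids both item~(ii) and Toponogov, at the price of unpacking the inductive scooping more explicitly. One minor remark: your Step~1 (the induction proving $\gamma_{p,u}(s)\notin\mathcal C$ for $s>r_0$) is correct but not actually needed for Steps~2--4; once you have the local coincidence near $q_0$ and convexity of $g$, the distance formula follows without it.
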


\begin{proof}
Recall $\alpha_\infty=\frac{\pi}{4}+\sum_{j=0}^\infty \theta_j$.
The only input we need here is the standard ``visual angle'' estimate in pinched
negative curvature (see e.g.\ \cite[Lemma 3.8]{RipollTelichevesky}): there exist
constants $\tilde r=\tilde r(\kappa_1,\kappa_2)>0$ and $C=C(\kappa_1,\kappa_2)>0$
such that for all $R\ge \tilde r$,
\begin{equation}\label{eq:visual}
\sup\Big\{ \measuredangle_p\big(\gamma_{p,q}'(0),\gamma_{p,x}'(0)\big)
:\ q\in S_p(R),\ x\in S_q(1)\Big\}
\le C e^{-\sqrt{\kappa_2}\,R}.
\end{equation}

Since $T_j\subset S_p(r_j)$ and $r_j=r_0+j\varepsilon$, estimate \eqref{eq:visual}
gives, for $r_0\ge \tilde r$,
\[
\theta_j \le C e^{-\sqrt{\kappa_2}\,r_j}
= C e^{-\sqrt{\kappa_2}\,r_0}\,e^{-j\sqrt{\kappa_2}\,\varepsilon}.
\]
Therefore
\[
\alpha_\infty-\frac{\pi}{4}
=\sum_{j=0}^\infty \theta_j
\le
C e^{-\sqrt{\kappa_2}\,r_0}\sum_{j=0}^\infty e^{-j\sqrt{\kappa_2}\,\varepsilon}
=
\frac{C}{1-e^{-\sqrt{\kappa_2}\,\varepsilon}}\;e^{-\sqrt{\kappa_2}\,r_0}.
\]
Choosing $R^*$ sufficiently large (depending only on $\kappa_1,\kappa_2$ and $\varepsilon$),
we obtain $\alpha_\infty<\pi/2$ whenever $r_0\ge R^*$.

\smallskip
\noindent\emph{Step 1: proof of item {\rm(i)}, i.e.\ $H_p(-w)\subset \mathcal C$.}
From the construction we already have
\[
C_p(-w,\pi-\alpha_\infty)\subset \mathcal C.
\]
If $\alpha_\infty<\pi/2$, then $\pi-\alpha_\infty>\pi/2$, hence
\[
H_p(-w)=C_p(-w,\pi/2)\subset C_p(-w,\pi-\alpha_\infty)\subset \mathcal C,
\]
which is exactly item {\rm(i)} of Theorem~\ref{thm:Anderson}.

\smallskip
\noindent\emph{Step 2: distance identity along rays within the controlled sector.}
Fix $u\in S_pM$ with $\measuredangle_p(u,w)\le \pi/4$ and set $\gamma(t)=\gamma_{p,u}(t)$.
Since $B_p(r_0)=C_0\subset \mathcal C$, we trivially have
\[
d(\gamma(t),\mathcal C)\le d(\gamma(t),B_p(r_0))=t-r_0,\qquad t\ge r_0.
\]
We now prove the reverse inequality.

First observe that item {\rm(ii)} (already built into the construction, via the choice
of the initial seam $T_0=C_p(w,\pi/4)\cap S_p(r_0)$ and the iterative scooping)
implies
\begin{equation}\label{eq:no_truncated_cone}
\mathcal C\cap \mathcal T_p(w,\pi/2,r_0+1)=\emptyset.
\end{equation}
In particular, the ray segment $\gamma([r_0,r_0+1])$ is contained in
$\mathcal  T_p(w,\pi/2,r_0+1)$ (because $\measuredangle_p(u,w)\le \pi/4$), hence
\[
\gamma(t)\notin \mathcal C,\qquad t\in (r_0,r_0+1].
\]

Now let $y\in\mathcal C$ be arbitrary. We claim that either $\rho_p(y)\le r_0$
or $\measuredangle_p(\gamma'(0),\gamma_{p,y}'(0))\ge \pi/2$.
If $\rho_p(y)\le r_0$, then the triangle inequality gives
\[
d(\gamma(t),y)\ge \rho_p(\gamma(t))-\rho_p(y)=t-\rho_p(y)\ge t-r_0.
\]
If instead $\measuredangle_p(u,\gamma_{p,y}'(0))\ge \pi/2$, then comparison with
the Euclidean triangle (Toponogov, since $K\le 0$) yields
\[
d(\gamma(t),y)^2\ge t^2+\rho_p(y)^2\ge t^2,
\qquad\text{hence}\qquad
d(\gamma(t),y)\ge t\ge t-r_0.
\]
In either case $d(\gamma(t),y)\ge t-r_0$. Taking the infimum over $y\in\mathcal C$
gives
\[
d(\gamma(t),\mathcal C)\ge t-r_0,\qquad t\ge r_0,
\]
which combined with the opposite inequality proves the desired identity.
\end{proof}

\medskip

From Lemma~\ref{lemmaA2} and \eqref{eq:no_truncated_cone}, the (a priori only $C^0$)
convex set $\mathcal C$ satisfies items {\rm(i)}--{\rm(iii)} of Theorem~\ref{thm:Anderson}.
To obtain a convex set with smooth boundary, we use a standard smoothing argument.

Let $u(x):=d(x,\mathcal C)$. In a Cartan--Hadamard manifold, $\mathcal C$ is
closed and convex, hence the metric projection onto $\mathcal C$ is unique and
$u$ is a continuous convex function on $M$ (in fact $u$ is $1$-Lipschitz and $C^{1,1}$
on $M\setminus \mathcal C$). By the convex approximation theorem of Greene--Wu
(see \cite[Proposition 2.2]{GW}), for every $\delta>0$ and every neighborhood
$U$ of $\partial\mathcal C$ there exists a smooth convex function $u_\delta\in C^\infty(M)$
such that
\[
|u_\delta-u|\le \delta \quad\text{on } U.
\]
To ensure strict convexity (and hence strict convexity of the resulting sublevel set),
we set
\[
g_\delta(x):=u_\delta(x)+\delta\,\rho_p(x)^2,
\]
where $\rho_p(x)=d(p,x)$. Since $\rho_p^2$ is smooth and strictly convex on $M$
(when $K\le -\kappa_2<0$), the function $g_\delta$ is smooth and strictly convex.

Choose $\delta>0$ so small that the $\delta$-tubular neighborhood of $\partial\mathcal C$
does not reach the forbidden truncated cone $\mathcal T_p(w,\pi/2,r_0+1)$ and does not move
the exit point along $\gamma_{p,w}$ (this is possible because \eqref{eq:no_truncated_cone}
gives a positive separation on compact annuli). Finally, pick a regular value
$s_\delta\in(0,\delta)$ of $g_\delta$ and define
\[
\mathcal C^{\mathrm{sm}}:=\{x\in M:\ g_\delta(x)\le s_\delta\}.
\]
Then $\mathcal C^{\mathrm{sm}}$ is a closed convex set with smooth boundary, and by
the choice of $\delta$ it still satisfies items {\rm(i)}--{\rm(iii)} of Theorem~\ref{thm:Anderson}.
Replacing $\mathcal C$ by $\mathcal C^{\mathrm{sm}}$ completes the proof.


\section{An \texorpdfstring{$L^\infty$}{Linfty} bound for bounded Allen--Cahn solutions}
\label{appendixB}

In this appendix we record a simple maximum principle argument showing that
bounded solutions of the Allen--Cahn Dirichlet problem automatically satisfy
$|u|\le 1$. This is the only place where we use that the nonlinearity
$f(u)=u-u^3$ has the constants $\pm 1$ as (strict) equilibria.

\begin{lemma}\label{lem:ACLinfty}
Let $(M^n,g)$ be a Cartan--Hadamard manifold with $K\le 0$, and let
$\Omega\subset M^n$ be a domain with smooth boundary.
Assume $u\in C^2(\Omega)\cap C^0(\overline{\Omega})$ is bounded and solves
\[
\begin{cases}
\Delta u+u-u^3=0 & \text{in }\Omega,\\
u=0 & \text{on }\partial\Omega.
\end{cases}
\]
Then $-1\le u\le 1$ in $\Omega$, i.e.\ $\|u\|_{L^\infty(\Omega)}\le 1$.
\end{lemma}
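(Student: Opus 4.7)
My plan is to reduce to a one-sided bound by symmetry, rewrite the equation so that the sign of the nonlinearity at $\pm 1$ is transparent, and then invoke a maximum principle; the only subtlety is that $\Omega$ may be unbounded, which forces me to supplement the classical interior maximum principle with a maximum principle at infinity.

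Since $f(u)=u-u^3$ is odd and the Dirichlet datum vanishes, $-u$ also solves the same boundary value problem, so it suffices to prove the one-sided bound $u\le 1$; the inequality $u\ge -1$ will follow by applying the argument to $-u$. I would rewrite the equation as
\[
\Delta u \;=\; u^3-u \;=\; u(u-1)(u+1),
\]
and set $M_0:=\sup_\Omega u$. Arguing by contradiction, I assume $M_0>1$. The key sign observation is that on $\{u>1\}$ the right-hand side is strictly positive, and in fact bounded below by a positive constant on the level set $\{u\ge c\}$ for any $c>1$.

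If $M_0$ is attained at some interior point $p^{\ast}\in\Omega$, the classical strong maximum principle gives $\Delta u(p^{\ast})\le 0$, whereas the equation gives $\Delta u(p^{\ast})=M_0(M_0^2-1)>0$, a contradiction. In particular this already finishes the proof when $\Omega$ is bounded, since $u=0$ on $\partial\Omega$ together with $M_0>1$ force the supremum to be attained in the interior. The genuine difficulty, and the main obstacle of the proof, is the unbounded case in which $M_0$ is only approached along a sequence $(p_k)$ escaping to infinity.

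To overcome this, I would invoke the Omori--Yau weak maximum principle at infinity, which holds on any complete Riemannian manifold whose Ricci curvature is bounded from below. In the pinched setting in which Lemma~\ref{lem:ACLinfty} is actually applied (the hypothesis $-\kappa\le K\le -1$ of Corollary~\ref{cor:AC}), one has $\Ric\ge -(n-1)\kappa$, so Omori--Yau applies to the bounded $C^2$ function $u$ and produces a sequence $(p_k)\subset\Omega$ with $u(p_k)\to M_0$ and $\Delta u(p_k)\le 1/k$. For $k$ large enough, $u(p_k)\ge (1+M_0)/2>1$, and then the equation forces $\Delta u(p_k)\ge \delta$ for a fixed $\delta=\delta(M_0)>0$, contradicting $\Delta u(p_k)\le 1/k\to 0$. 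Applying the symmetric argument to $-u$ yields $|u|\le 1$, as claimed.
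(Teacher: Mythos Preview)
Your argument is essentially the paper's own argument dressed differently: the paper does not cite Omori--Yau by name but carries out the standard penalization proof of it, replacing $u$ by $w_\varepsilon=u-\varepsilon\,r^2$ (with $r=d(o,\cdot)$) to force an interior maximum, using $\Delta w_\varepsilon\le 0$ there, and letting $\varepsilon\downarrow 0$. Your one-sided reduction, the sign computation $\Delta u=u(u-1)(u+1)>0$ on $\{u>1\}$, and the handling of the unbounded case via a weak maximum principle at infinity all match the paper line for line.

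Your caveat about the curvature hypothesis is not a mere technicality; it is the genuine gap relative to the lemma \emph{as stated}. Omori--Yau requires a lower Ricci bound, which $K\le 0$ alone does not provide. The paper's proof has the very same gap: it asserts $\Delta r\le (n-1)/r$ ``valid for $K\le 0$'', but Laplacian comparison under $K\le 0$ gives the opposite inequality $\Delta r\ge (n-1)/r$; an \emph{upper} bound on $\Delta r$ (hence on $\Delta(r^2)$) needs a lower curvature bound. So neither your proof nor the paper's establishes the lemma under the bare hypothesis $K\le 0$, but both are perfectly valid in the pinched setting $-\kappa\le K\le -1$ of Corollary~\ref{cor:AC}, which is the only place the lemma is invoked. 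Your explicit restriction to that setting is therefore harmless for the paper's purposes and, if anything, more accurate than the paper's own write-up.
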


\begin{proof}
We prove $\sup_\Omega u\le 1$. The lower bound $\inf_\Omega u\ge -1$ follows
by applying the same argument to $-u$.

Let $o\in M^n$ be fixed and set $r(x):=d(o,x)$. Since $u$ is bounded, the function
\[
w_\varepsilon(x):=u(x)-\varepsilon r(x)^2,\qquad \varepsilon>0,
\]
satisfies $w_\varepsilon(x)\to -\infty$ as $r(x)\to\infty$. Moreover, on
$\partial\Omega$ we have $u=0$, hence $w_\varepsilon\le 0$ on $\partial\Omega$.
Therefore $w_\varepsilon$ attains a global maximum at some point
$x_\varepsilon\in\overline{\Omega}$. If $\sup_\Omega u\le 0$ there is nothing
to prove, so assume $\sup_\Omega u>0$ and choose $\varepsilon$ small enough so
that $\max_{\overline{\Omega}} w_\varepsilon>0$. Then necessarily
$x_\varepsilon\in\Omega$.

At the interior maximum point $x_\varepsilon$ we have $\nabla w_\varepsilon(x_\varepsilon)=0$
and $\Delta w_\varepsilon(x_\varepsilon)\le 0$, hence
\[
0\ge \Delta w_\varepsilon(x_\varepsilon)
= \Delta u(x_\varepsilon)-\varepsilon\,\Delta(r^2)(x_\varepsilon).
\]
Using the equation $\Delta u=u^3-u$ we obtain
\begin{equation}\label{eq:AC_eps_ineq}
u(x_\varepsilon)^3-u(x_\varepsilon)\le \varepsilon\,\Delta(r^2)(x_\varepsilon).
\end{equation}

We claim that $\limsup_{\varepsilon\downarrow 0}u(x_\varepsilon)=\sup_\Omega u$.
Indeed, for every $x\in\Omega$,
\[
u(x)-\varepsilon r(x)^2 \le w_\varepsilon(x_\varepsilon)=u(x_\varepsilon)-\varepsilon r(x_\varepsilon)^2
\le u(x_\varepsilon),
\]
so taking the supremum in $x$ gives $\sup_\Omega u\le u(x_\varepsilon)+o(1)$ as
$\varepsilon\downarrow 0$, proving the claim.

Assume by contradiction that $\sup_\Omega u>1$. Choose $\delta>0$ such that
$\sup_\Omega u\ge 1+2\delta$. By the claim above, for all sufficiently small
$\varepsilon$ we have $u(x_\varepsilon)\ge 1+\delta$, and hence
\[
u(x_\varepsilon)^3-u(x_\varepsilon)\ge c_\delta>0,
\]
where $c_\delta:=(1+\delta)^3-(1+\delta)$.

On the other hand, in a Cartan--Hadamard manifold the squared distance function
$r^2$ is smooth away from $o$ and satisfies the Laplacian comparison estimate
\begin{equation}\label{eq:lap_r2_growth}
\Delta(r^2)=2r\,\Delta r+2 \le 2(n-1)+2+2(n-1)r,
\end{equation}
where we used $\Delta r\le \frac{n-1}{r}$ (valid for $K\le 0$ away from $o$).
In particular, $\Delta(r^2)$ has at most linear growth in $r$, so from
\eqref{eq:AC_eps_ineq} we obtain
\[
0<c_\delta \le \varepsilon\,\Delta(r^2)(x_\varepsilon)\xrightarrow[\varepsilon\downarrow 0]{}0,
\]
a contradiction. Therefore $\sup_\Omega u\le 1$.

This completes the proof.
\end{proof}

\section*{Acknowledgments}
We thank Graham Smith for his interest in this work and for valuable comments and suggestions, especially regarding the existence of convex barriers. The first author thanks the Department of Geometry and Topology and the Institute of Mathematics (IMAG) at the University of Granada for their hospitality.

M.P. Cavalcante received funding from the Brazilian National Council for Scientific and Technological Development (CNPq) (Grants: 405468/2021-0 and 11136/2023-0) and CAPES--Math AmSud project \emph{New Trends in Geometric Analysis} (CAPES, Grant 88887.985521/2024-00).

J.M. Espinar is partially supported by the {\it Maria de Maeztu} Excellence Unit IMAG, reference CEX2020-001105-M, funded by MCINN/AEI/10.13039/ 501100011033/CEX2020-001105-M, Spanish MIC Grant PID2024-160586NB-I00 and MIC-NextGeneration EU Grant CNS2022-135390 CONSOLIDACION2022.

D.A. Marín is partially supported by the {\it Maria de Maeztu} Excellence Unit IMAG, reference CEX2020-001105-M, funded by MCINN/AEI/10.13039/ 501100011033/CEX2020-001105-M, and Spanish MIC Grant PID2023-150727 NB.I00.

\bibliographystyle{amsplain}
\bibliography{references}

\end{document}